\documentclass[11pt]{article}
\usepackage{amsmath,amssymb, amscd}

\newtheorem{propo}{{\bf Proposition}}[section]
\newtheorem{coro}[propo]{{\bf Corollary}}
\newtheorem{lemma}[propo]{{\bf Lemma}} \newtheorem{theor}[propo]{{\bf
Theorem}} \newtheorem{ex}{{\sc Example}}[section]

\newenvironment{proof}{{\bf Proof.}}{$\Box$}

\def\C{{\mathbb C}}
\def\N{{\mathbb N}}
\begin{document}

\vspace*{1.0in}

\begin{center} LIE ALGEBRAS WITH NILPOTENT LENGTH GREATER THAN THAT OF EACH OF THEIR SUBALGEBRAS
\end{center}
\bigskip

\begin{center} DAVID A. TOWERS 
\end{center}
\bigskip
\centerline {Department of
Mathematics, Lancaster University} \centerline {Lancaster LA1 4YF,
England} \centerline {d.towers@lancaster.ac.uk}
\bigskip

\begin{abstract}
The main purpose of this paper is to study the finite-dimensional solvable Lie algebras described in its title, which we call {\em minimal non-${\mathcal N}$}. To facilitate this we investigate solvable Lie algebras of nilpotent length $k$, and of nilpotent length $\leq k$, and {\em extreme} Lie algebras, which have the property that their nilpotent length is equal to the number of conjugacy classes of maximal subalgebras. We characterise the minimal non-${\mathcal N}$ Lie algebras in which every nilpotent subalgebra is abelian, and those of solvability index $\leq 3$.
\par

\noindent {\em Mathematics Subject Classification 2010}: 17B05, 17B20, 17B30, 17B50.
\par
\noindent {\em Key Words and Phrases}: Lie algebras, solvable, nilpotent series, nilpotent length, chief factor, extreme, nilregular, characteristic ideal, $A$-algebra. 
\end{abstract}

\section{Introduction}
Let $L$ be a Lie algebra, and let $L^{(0)}=L$, $L^{(i+1)}=[L^{(i)},L^{(i)}]$ be its derived series. Recall that $L$ is {\em solvable} if there exists $r$ such that $L^{(r)}=0$; the smallest such $r$ is called the {\em derived length} of $L$. Similarly, $L^1=L$, $L^{i+1}=[L^i,L]$ is the lower central series of $L$; $L$ is {\em nilpotent} of {\em nilpotency index $r$} if $L^{r+1}=0$ but $L^r \neq 0$. Throughout, $L$ will denote a finite-dimensional solvable Lie algebra over a field $F$.  The symbol `$\oplus$' will denote an algebra direct sum, whilst `$\dot{+}$' will denote a direct sum of the underlying vector space structure alone. If $U$ is a subalgebra of $L$ we define $U_L$, the {\em core} (with respect to $L$) of $U$, to be the largest ideal of $L$ contained in $U$. We say that $U$ is {\em core-free} in $L$ if $U_L = 0$. 
\par

We denote the nilradical of $L$ by $N(L)$.
We define the {\em upper nilpotent series} of $L$ by 
\[ N_0(L)=0, \hspace{.5cm} N_i(L)/N_{i-1}(L)=N(L/N_{i-1}(L)) \hbox{ for } i=1,2, \ldots
\]
The {\em nilpotent length}, $n(L)$, of $L$ is the smallest integer $n$ such that $N_n(L)=L$. 
\par

We define the {\em nilpotent residual}, $\gamma_{\infty}(L)$, of $L$ be the smallest ideal of $L$ such that $L/\gamma_{\infty}(L)$ is nilpotent. Clearly this is the intersection of the terms of the lower central series for $L$. Then the {\em lower nilpotent series} for $L$ is the sequence of ideals $\Gamma_i(L)$ of $L$ defined by $\Gamma_0(L) = L$, $\Gamma_{i+1}(L) = \gamma_{\infty}(\Gamma_i(L))$ for $i \geq 0$. First we note that this series has the same length as that of the upper nilpotent series. 

\begin{lemma}\label{l:length} Suppose that $r$ is the smallest integer such that $N_r(L)=L$, and that $s$ is the smallest integer such that $\Gamma_s(L)=0$. Then 
\begin{itemize}
\item[(i)] $\Gamma_{s-i}(L)\subseteq N_i(L)$ for $i=0,\ldots,r$;
\item[(ii)] $\Gamma_i(L)\subseteq N_{r-i}(L)$ for $i=0,\ldots,s$; and
\item[(iii)] $r=s$.
\end{itemize}
\end{lemma}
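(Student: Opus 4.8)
The plan is to prove (i) and (ii) by two short, independent inductions on the index $i$, and then to read off (iii) from their endpoint cases. The facts I will lean on are standard: for any ideal $I$ of a Lie algebra, $\gamma_{\infty}(I)=\bigcap_k I^k$ is again an ideal (of $L$, even, since $[I^k,L]\subseteq I^k$ whenever $I\lhd L$), $I/\gamma_{\infty}(I)$ is nilpotent, and $\gamma_{\infty}(I)$ is contained in every ideal $J$ of $I$ with $I/J$ nilpotent; and, in finite dimensions, every nilpotent ideal lies in the nilradical, while subalgebras and homomorphic images of nilpotent algebras are nilpotent. A preliminary observation is that each $\Gamma_i(L)$ and each $N_i(L)$ is an ideal of $L$ (for the $N_i$ by the correspondence theorem, for the $\Gamma_i$ by the parenthetical remark above and induction).

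For (i) I argue by induction on $i$ for $i=0,\ldots,s$. When $i=0$, $\Gamma_s(L)=0=N_0(L)$. Suppose $\Gamma_{s-i}(L)\subseteq N_i(L)$ with $i\le s-1$. Since $\Gamma_{s-i}(L)=\gamma_{\infty}(\Gamma_{s-i-1}(L))$, the quotient $\Gamma_{s-i-1}(L)/\Gamma_{s-i}(L)$ is nilpotent; as $\Gamma_{s-i}(L)\subseteq \Gamma_{s-i-1}(L)\cap N_i(L)$, the further quotient $(\Gamma_{s-i-1}(L)+N_i(L))/N_i(L)$ is nilpotent too, and it is an ideal of $L/N_i(L)$, hence lies in $N(L/N_i(L))=N_{i+1}(L)/N_i(L)$. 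Thus $\Gamma_{s-i-1}(L)\subseteq N_{i+1}(L)$, completing the induction. The case $i=s$ gives $L=\Gamma_0(L)\subseteq N_s(L)$, so $N_s(L)=L$ and therefore $r\le s$; in particular the asserted range $i=0,\ldots,r$ in (i) is covered.

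For (ii) I again induct on $i$, now for $i=0,\ldots,r$. When $i=0$, $\Gamma_0(L)=L=N_r(L)$. Suppose $\Gamma_i(L)\subseteq N_{r-i}(L)$ with $i\le r-1$. Then $\Gamma_i(L)/(\Gamma_i(L)\cap N_{r-i-1}(L))\cong(\Gamma_i(L)+N_{r-i-1}(L))/N_{r-i-1}(L)$ is a subalgebra of the nilpotent algebra $N_{r-i}(L)/N_{r-i-1}(L)=N(L/N_{r-i-1}(L))$, hence nilpotent; by the defining property of the nilpotent residual, $\Gamma_{i+1}(L)=\gamma_{\infty}(\Gamma_i(L))\subseteq \Gamma_i(L)\cap N_{r-i-1}(L)\subseteq N_{r-i-1}(L)$, completing the induction. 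The case $i=r$ gives $\Gamma_r(L)\subseteq N_0(L)=0$, so $\Gamma_r(L)=0$ and therefore $s\le r$.

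Finally, (iii) follows at once: $r\le s$ from the endpoint of (i) and $s\le r$ from the endpoint of (ii) give $r=s$, after which (i) and (ii) hold over the full range $i=0,\ldots,r=s$ as stated. I do not expect a genuine obstacle here; the only thing that needs care is organising the two inductions so that neither presupposes $r=s$ — the equality must be \emph{deduced} from the two boundary cases rather than assumed — and keeping the index shifts $s-i$ versus $r-i$ straight throughout.
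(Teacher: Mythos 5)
Your proof is correct and follows essentially the same route as the paper's: two inductions (downward-indexed for (i), upward for (ii)) using that a nilpotent ideal of $L/N_i(L)$ lies in $N_{i+1}(L)/N_i(L)$ and that $\gamma_{\infty}(\Gamma_i(L))$ lies in any ideal of $\Gamma_i(L)$ with nilpotent quotient. Your handling of (iii) via the endpoint cases $i=s$ and $i=r$ is in fact slightly cleaner than the paper's phrasing, but it is the same argument.
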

\begin{proof}\begin{itemize}
\item[(i)] Clearly $\Gamma_{s-1}(L)$ is a nilpotent ideal of $L$ and so $\Gamma_{s-1}(L)\subseteq N_1(L)$. Suppose that $\Gamma_{s-k}(L)\subseteq N_k(L)$ for some $k\geq 1$. then
\[ \frac{\Gamma_{s-k-1}(L)+N_k(L)}{N_k(L)}
\] is a nilpotent ideal of $L/N_k(L)$ and so is contained in $N_{k+1}(L)/N_k(L)$. Hence $\Gamma_{s-k-1}(L)\subseteq N_{k+1}(L)$, and the result follows.
\item[(ii)] Clearly $\Gamma_1(L)\subseteq N_{r-1}(L)$. Suppose that $\Gamma_k(L)\subseteq N_{r-k}(L)$ for some $k\geq 1$. Then
\[ \frac{\Gamma_k(L)+N_{r-k-1}(L)}{N_{r-k-1}(L)}\cong \frac{\Gamma_k(L)}{\Gamma_k(L)\cap N_{r-k-1}(L)}
\] is nilpotent, whence $\Gamma_{k+1}(L)\subseteq N_{r-k-1}(L)$, and the result follows.
\item[(iii)] From (i) we have that $\Gamma_{s-r}(L)\subseteq N_r(L)$, so $s-r\geq 0$; from (ii) we see that $\Gamma_s(L)\subseteq N_{r-s}(L)$, so $r-s\geq 0$. Thus $r=s$.
\end{itemize}
\end{proof}
\bigskip

Although the upper and lower nilpotent series have equal lengths, $n$ say, we do not necessarily have that $\Gamma_{n-i}(L)=N_i(L)$ for $i=0,\ldots,n$, as the following example shows.

\begin{ex} Let $L$ be the metabelian Lie algebra over $\C$ with basis $x_1$, $x_2$, $x_3$, $x_4$ and non-zero products $[x_1,x_2]=x_3$, $[x_1,x_3]=x_3$, $[x_1,x_4]=x_4$, $[x_2,x_3]=x_4$. Then $N_1(L)=\C x_2+\C x_3+\C x_4$, $N_2(L)=L$, $\Gamma_1(L)=\C x_3+\C x_4$, $\Gamma_2(L)=0$.
\end{ex}

From now on we choose to work with the upper nilpotent series. In section $2$ we investigate properties of this series, particularly its relationship to maximal subalgebras, and of Lie algebras with nilpotent length $k$ or $\leq k$. In considering factor algebras, a complication arises because, unlike the situation in group theory, there are solvable Lie algebras $L$ in which, for an ideal $I$ of $L$, $N(I)$ may not be contained in $N(L)$. To overcome this obstacle we introduce the notions of nilregular and strongly nilregular subalgebras of $L$; in particular, it is shown that if a maximal subalgebra of $L$ has a strongly nilregular core, its nilpotent length is at most one less than that of $L$. The section concludes with a fundamental decomposition theorem for Lie algebras with a given nilpotent length.
\par

In section $3$ we introduce the class of extreme Lie algebras in which $N_i(L)/\phi_i(L)$ is a chief factor for each $i=1, \ldots, n(L)$. These are characteriseded in relation to the decomposition result from the previous section and described explicitly in two special cases. 
\par

The final section then considers the algebras in the title of the paper. By considering their relationship to extreme Lie algebras the minimal non-${\mathcal N}$ Lie algebras in which every nilpotent subalgebra is abelian, and those of solvability index $\leq 3$, are characterised. The last result is that a homomorphic image of a minimal non-${\mathcal N}$ Lie algebra is minimal non-${\mathcal N}$ if it has a complemented minimal ideal.

Much of this is inspired by corresponding work in group theory in \cite{frick} and \cite{c-f-h}, but there are significant differences encountered in the Lie case.

\section{Properties of the upper nilpotent series}
The {\em Frattini subalgebra} of $L$, $\phi(L)$, is the intersection of the maximal subalgebras of $L$. Since $L$ is solvable, this is an ideal of $L$ (\cite[Lemma 3.4]{b-g-h}). 
The {\em Frattini series} of $L$ is given by
\[ \phi_i(L)/N_{i-1}(L)= \phi(L/N_{i-1}(L)) \hbox{ for } i=1,2, \ldots
\]
\begin{lemma}\label{l:nat} Let $B$ be an ideal of $L$ with $B \subseteq \phi(L)$. Then $N_i(L/B)=N_i(L)/B$ and $\phi_i(L/B)=\phi_i(L)/B$ for every $i=1,2, \ldots, n(L)$.
\end{lemma}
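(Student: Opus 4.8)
The plan is to induct on $i$: the whole substance lies in the case $i=1$, while the passage from $i-1$ to $i$ is a formal consequence of the third isomorphism theorem together with the fact that the nilradical and the Frattini subalgebra are invariants of isomorphism.

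For the base case $i=1$ one has $N_1=N$ and $\phi_1=\phi$, so two identities must be checked. The Frattini one is easy: since $B\subseteq\phi(L)$, every maximal subalgebra of $L$ contains $B$, so the correspondence theorem identifies the maximal subalgebras of $L/B$ with exactly the $M/B$ for $M$ maximal in $L$, and intersecting gives $\phi(L/B)=\phi(L)/B$. For the nilradical, the inclusion $N(L)/B\subseteq N(L/B)$ is immediate once one recalls that for solvable $L$ one has $\phi(L)\subseteq N(L)$ (\cite{b-g-h}): then $B\subseteq N(L)$, and $N(L)/B$ is a nilpotent ideal of $L/B$. The reverse inclusion $N(L/B)\subseteq N(L)/B$ is the crux (treated below): writing $N(L/B)=C/B$ with $C$ an ideal of $L$, one must show that $C$ is nilpotent, so that $C\subseteq N(L)$.

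For the inductive step, suppose $i\ge 2$ and that $N_{i-1}(L/B)=N_{i-1}(L)/B$ (the level-$(i-1)$ Frattini identity is not needed here). Since $B\subseteq\phi(L)\subseteq N(L)\subseteq N_{i-1}(L)$, the third isomorphism theorem yields a natural isomorphism
\[(L/B)\big/N_{i-1}(L/B)\ \cong\ L\big/N_{i-1}(L),\]
compatible with the canonical projections out of $L/B$ and out of $L$. Applying $N$, respectively $\phi$, to the two sides and using that these are isomorphism-invariant, the subalgebras $N\bigl((L/B)/N_{i-1}(L/B)\bigr)$ and $\phi\bigl((L/B)/N_{i-1}(L/B)\bigr)$ correspond to $N(L/N_{i-1}(L))=N_i(L)/N_{i-1}(L)$ and $\phi(L/N_{i-1}(L))=\phi_i(L)/N_{i-1}(L)$; unwinding the definitions of $N_i$ and $\phi_i$ and pulling back along $L/B\to L/N_{i-1}(L)$ then gives $N_i(L/B)=N_i(L)/B$ and $\phi_i(L/B)=\phi_i(L)/B$. (The Frattini identity for $i=1$ does not fall out of this formal argument, since there $N_0=0$; it was settled directly in the previous paragraph.)

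The one genuine obstacle is the reverse inclusion in the base case — that passing to the quotient by an ideal contained in $\phi(L)$ produces no new nilpotent ideals. This is precisely where the hypothesis $B\subseteq\phi(L)$, rather than the weaker ``$B$ nilpotent'', is essential: with merely $B$ nilpotent the statement fails already for the two-dimensional non-abelian algebra with $B$ taken to be its one-dimensional derived ideal. It is the Lie counterpart of the group identity $F(G/N)=F(G)/N$ for $N\le\Phi(G)$, and I would either invoke it directly from the Frattini theory of solvable Lie algebras or reprove it by the standard route: the ideal $C$ with $C/B=N(L/B)$ satisfies $C^{\,k+1}\subseteq B\subseteq\phi(L)$ for some $k$, and one exploits the non-generator property of $\phi(L)$ to force the lower central series of $C$ down to $0$.
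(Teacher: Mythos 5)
Your proof is correct and takes essentially the same route as the paper: the base case rests on exactly the two facts you isolate --- $N(L/B)=N(L)/B$ and $\phi(L/B)=\phi(L)/B$ for $B\subseteq\phi(L)$, which the paper simply quotes as \cite[Theorem 5]{barnes} and \cite[Proposition 4.3]{frat} --- and the inductive step is the same third-isomorphism-theorem computation applied to $(L/B)/N_{i-1}(L/B)\cong L/N_{i-1}(L)$. Your identification of the reverse inclusion $N(L/B)\subseteq N(L)/B$ as the one genuinely non-formal point, to be settled by citation or by the standard Fitting-component/non-generator argument, matches the paper's treatment.
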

\begin{proof} We have $N(L)/B=N(L/B)$, by \cite[Theorem 5]{barnes}, and $\phi(L)/B=\phi(L/B)$, by \cite[Proposition 4.3]{frat}. Suppose that $N_k(L)/B=N_k(L/B)$ and $\phi_k(L)/B= \phi_k(L/B)$. Then $B \subseteq \phi_{k+1}(L)$ and
\[ \frac{N_{k+1}(L/B)}{N_k(L/B)}=N\left(\frac{L/B}{N_k(L)/B}\right)=\frac{N_{k+1}(L)/B}{N_k(L)/B},
\]
whence $N_{k+1}(L)/B=N_{k+1}(L/B)$.
Similarly,
\[ \frac{\phi_{k+1}(L/B)}{N_k(L/B)}=\phi\left(\frac{L/B}{N_k(L)/B}\right)=\frac{\phi_{k+1}(L)/B}{N_k(L)/B},
\]
which yields that $\phi_{k+1}(L)/B=\phi_{k+1}(L/B)$.
\end{proof}

\begin{lemma}\label{l:quot} If $A$ is an ideal of $L$ with $N_{r-1}(L)\subseteq A \subseteq N_r(L)$, then $n(L/A)=n(L)-r$ or $n(L)-r+1$.
\end{lemma}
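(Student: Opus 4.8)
The plan is to write $n=n(L)$ and squeeze $n(L/A)$ between $n-r$ and $n-r+1$ by comparing the upper nilpotent series of $L/A$ with the terms $N_j(L)$ of the upper nilpotent series of $L$. The only non-formal input needed is the standard fact that a homomorphic image of a nilpotent ideal is again a nilpotent ideal, hence is contained in the nilradical of the target algebra; combined with the isomorphism $(X+Y)/Y\cong X/(X\cap Y)$ this suffices to move nilpotency through all the relevant quotients.

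\textbf{Upper bound.} Since $N_{r-1}(L)\subseteq A\subseteq N_r(L)$, the algebra $N_r(L)/A$ is a homomorphic image of $N_r(L)/N_{r-1}(L)=N(L/N_{r-1}(L))$, hence nilpotent, and being an ideal of $L/A$ it lies in $N(L/A)=N_1(L/A)$. I would then show by induction on $j\ge 0$ that $(N_{r+j}(L)+A)/A\subseteq N_{j+1}(L/A)$: granting the case $j-1$, the image of $(N_{r+j}(L)+A)/A$ in $(L/A)/N_j(L/A)$ is a homomorphic image of $(N_{r+j}(L)+A)/(N_{r+j-1}(L)+A)$, which is in turn a quotient of the nilpotent algebra $N_{r+j}(L)/N_{r+j-1}(L)$; so that image is a nilpotent ideal of $(L/A)/N_j(L/A)$ and therefore sits inside $N_{j+1}(L/A)/N_j(L/A)$. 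Taking $j=n-r$ gives $L/A=(N_n(L)+A)/A\subseteq N_{n-r+1}(L/A)$, i.e. $n(L/A)\le n-r+1$.

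\textbf{Lower bound.} Let $M_i$ be the ideal of $L$ with $A\subseteq M_i$ and $M_i/A=N_i(L/A)$. I would prove by induction that $M_i\subseteq N_{r+i}(L)$, the base case being precisely $M_0=A\subseteq N_r(L)$. For the step, $M_i/M_{i-1}$ is nilpotent and $M_{i-1}\subseteq N_{r+i-1}(L)$ by hypothesis, so $M_i/(M_i\cap N_{r+i-1}(L))$, being a quotient of $M_i/M_{i-1}$, is nilpotent; hence $(M_i+N_{r+i-1}(L))/N_{r+i-1}(L)$ is a nilpotent ideal of $L/N_{r+i-1}(L)$ and so lies in $N_{r+i}(L)/N_{r+i-1}(L)$, giving $M_i\subseteq N_{r+i}(L)$. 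Now $N_i(L/A)=L/A$ forces $M_i=L$, hence $N_{r+i}(L)=L$, hence $r+i\ge n$; thus $n(L/A)\ge n-r$. Combining the two bounds yields the claim.

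\textbf{Main obstacle.} There is no genuinely hard step, only the bookkeeping; the one point that needs care — and is exactly where the paper warns that the Lie-theoretic situation is more delicate than the group-theoretic one — is that one must never try to compare the nilradical of an ideal of $L$ with $N(L)$ directly. Every nilradical computation above is instead carried out in a quotient $L/N_j(L)$ or in the subquotient $N_{j+1}(L)/N_j(L)$, where nilpotency is guaranteed by the definition of the upper nilpotent series, and it is this discipline that makes both inductions run. I would finally check the degenerate cases $r=n$ and $A=L$, but these are covered by the same inequalities.
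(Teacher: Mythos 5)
Your proof is correct and takes essentially the same route as the paper: both arguments establish the interleaving $N_{r+i-1}(L)\subseteq K_i\subseteq N_{r+i}(L)$, where $K_i/A=N_i(L/A)$ (your $M_i$), which the paper records as the single chain $N_{r-1}(L)\subseteq A\subseteq N_r(L)\subseteq K_1\subseteq N_{r+1}(L)\subseteq\cdots\subseteq K_{n(L)-r}\subseteq N_{n(L)}(L)$ and dismisses as a ``straightforward induction argument''. Your two inductions are exactly that induction written out, with the right discipline of computing nilradicals only in quotients of $L$ rather than in ideals.
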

\begin{proof} Put $K_i/A=N_i(L/A)$. Then it is easy to see that $K_i/K_{i-1}=N(L/K_{i-1})$, and a straightforward induction argument shows that 
\[ N_{r-1}(L)\subseteq A\subseteq N_r(L)\subseteq K_1\subseteq N_{r+1}(L)\subseteq \ldots \subseteq K_{n(L)-r}\subseteq N_{n(L)}(L).
\] If $K_{n(L)-r}=N_{n(L)}(L)$ we have $n(L/A)=n(L)-r$; otherwise, $n(L/A)=n(L)-r+1$.
\end{proof}

\begin{lemma}\label{l:max} Let $M$ be a maximal subalgebra of the solvable Lie algebra $L$. Then 
\begin{itemize}
\item[(i)] $N_i(L) \cap M \subseteq N_i(M)$;
\item[(ii)] $N_i(M)_L \subseteq N_i(L)$;
\item[(iii)] if $N_i(L) \subseteq M$ then $N_i(M)_L=N_i(L)$; 
\item[(iv)] if $k$ is the smallest positive integer such that $N_k(L) \not \subseteq M$ then $N_k(M)_L=N_k(L) \cap M$; and
\item[(v)] if $N(L) \subseteq M$ then $N(M)$ acts nilpotently on $L$.
\end{itemize}
\end{lemma}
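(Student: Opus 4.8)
The plan is to prove the five parts essentially in the order stated, since each is a fairly self-contained application of standard facts about the nilradical and maximal subalgebras of solvable Lie algebras, together with an induction on $i$ built on the $i=1$ case (i.e.\ statements about $N(L)$ itself). For (i), I would argue by induction on $i$. The base case asserts $N(L)\cap M\subseteq N(M)$; this holds because $N(L)\cap M$ is a nilpotent ideal of $M$, hence lies in the nilradical $N(M)$. For the inductive step, suppose $N_{i-1}(L)\cap M\subseteq N_{i-1}(M)$. One wants to push this through the quotient $M/(N_{i-1}(M))$; the image of $N_i(L)\cap M$ is a nilpotent subalgebra of $M/N_{i-1}(M)$ that I would like to show is an ideal and hence contained in $N(M/N_{i-1}(M))=N_i(M)/N_{i-1}(M)$. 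The delicate point is controlling how $N_i(L)\cap M$ sits modulo $N_{i-1}(M)$ versus modulo $N_{i-1}(L)\cap M$, and this is exactly the kind of place where the paper has flagged that $N(I)\not\subseteq N(L)$ type pathologies occur; so I expect to need a lemma like \ref{l:nat} or a careful argument that $[N_i(L)\cap M, M]\subseteq N_{i-1}(L)\cap M + (\text{nilpotent part})$.

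For (ii), again induct on $i$. The base case: $N(M)_L$ is an ideal of $L$ contained in the nilpotent subalgebra $N(M)$, hence is a nilpotent ideal of $L$, so $N(M)_L\subseteq N(L)=N_1(L)$. Inductive step: assuming $N_{i-1}(M)_L\subseteq N_{i-1}(L)$, pass to $L/N_{i-1}(L)$ and use that the image of $N_i(M)_L$ is an ideal of the quotient lying in (the image of) a nilpotent-over-nilpotent subalgebra, so it lands in $N(L/N_{i-1}(L))=N_i(L)/N_{i-1}(L)$. Part (iii) should then follow by combining (i) and (ii): if $N_i(L)\subseteq M$ then $N_i(L)=N_i(L)\cap M\subseteq N_i(M)$, and since $N_i(L)$ is an ideal of $L$ contained in $N_i(M)$ we get $N_i(L)\subseteq N_i(M)_L$; the reverse inclusion is (ii). Part (iv): with $k$ minimal such that $N_k(L)\not\subseteq M$, maximality of $M$ forces $N_k(L)+M=L$, so $N_k(L)\cap M$ is an ideal of $L$ (being the intersection of an ideal with a subalgebra whose sum is $L$, hence normalised by both $N_k(L)$ and $M$); then $N_k(L)\cap M\subseteq N_k(M)$ by (i) gives $N_k(L)\cap M\subseteq N_k(M)_L$, while (ii) gives $N_k(M)_L\subseteq N_k(L)$, and intersecting with $M$ (noting $N_k(M)_L\subseteq M$) yields $N_k(M)_L\subseteq N_k(L)\cap M$; combining gives equality.

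For (v), suppose $N(L)\subseteq M$. Since $N(M)$ is a nilpotent subalgebra and $N(L)$ is an ideal of $L$, I would consider the action of $N(M)$ on $L$ by ad-maps; $N(M)$ acts nilpotently on $N(M)$ itself and on $L/N(L)$ one can reduce using that $N(M)_L\subseteq N(L)$ by (ii) and that $N(M)$ centralises... more precisely, the standard trick is: $N(M)$ acts nilpotently on $L$ iff $N(M)$ acts nilpotently on each factor of a chief series, and since $N(L)$ contains the centraliser behaviour forced by the fact that $M/N(L)$ sees $L/N(L)$ as a module, one invokes that a maximal subalgebra containing $N(L)$ has the property that $L/N(L)$ is $N(M)$-trivial on the relevant chief factors. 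The cleanest route is probably: $N(M)+N(L)$ is a nilpotent ideal of $M$ (as $N(L)\subseteq M$ is an ideal of $M$ and $N(M)$ is the nilradical), hence $N(M)+N(L)=N(M)$, i.e.\ $N(L)\subseteq N(M)$; then $\mathrm{ad}\,x$ for $x\in N(M)$ is nilpotent on $N(M)\supseteq N(L)$, and on $L/N(L)$ one uses that $N(L)$ is self-centralising-like enough (or that $L/N(L)$ acts faithfully and $N(M)/N(L)$ is a nilpotent ideal of $M/N(L)$ which must meet the faithful part trivially) to conclude $\mathrm{ad}\,x$ is nilpotent on all of $L$.

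The main obstacle I anticipate is the inductive step of (i): keeping precise track of $N_i(L)\cap M$ modulo $N_{i-1}(M)$ rather than modulo $N_{i-1}(L)$, given that in the Lie setting $N_{i-1}(M)$ and $N_{i-1}(L)\cap M$ need not be comparable in the convenient direction. I expect the resolution to use that $N_{i-1}(L)\cap M\subseteq N_{i-1}(M)$ (the inductive hypothesis) so that the image of $N_i(L)\cap M$ in $M/N_{i-1}(M)$ is a quotient of $(N_i(L)\cap M)/(N_{i-1}(L)\cap M)$, which is nilpotent as a section of $N_i(L)/N_{i-1}(L)$, together with an argument that this image is an ideal of $M/N_{i-1}(M)$ — the latter likely needing that $N_{i-1}(M)$ is itself built from ideals, or a direct commutator computation $[N_i(L)\cap M,\,M]\subseteq N_{i-1}(L)+ (N_i(L)\cap M)$ combined with $[N_i(L),L]\subseteq$ (something eventually nilpotent modulo $N_{i-1}(L)$). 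Everything else I expect to be routine once the $i=1$ cases and the $N(L)\subseteq N(M)$ observation in (v) are in hand.
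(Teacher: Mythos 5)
Parts (i)--(iii) of your plan are essentially sound and match the paper's (largely omitted) arguments; in (ii) do make the core-taking step explicit: from $(N_i(M)_L)^r\subseteq N_i(M)^r\subseteq N_{i-1}(M)$ you must observe that $(N_i(M)_L)^r$ is an ideal of $L$, hence lies in $N_{i-1}(M)_L\subseteq N_{i-1}(L)$ by the inductive hypothesis --- it is \emph{not} enough that the image of $N_i(M)_L$ lies inside the image of the ``nilpotent-over-nilpotent'' subalgebra $N_i(M)$, precisely because $N_{i-1}(M)$ need not lie in $N_{i-1}(L)$. The first genuine gap is in (iv): your parenthetical justification that $N_k(L)\cap M$ is an ideal of $L$ ``being the intersection of an ideal with a subalgebra whose sum is $L$'' is false in general --- such an intersection is normalised by $M$ but there is no reason for $[N_k(L),\,N_k(L)\cap M]$ to land back in $M$. (Take $L$ the Heisenberg algebra $A=Fp+Fq+Fz$, $[p,q]=z$, extended by a suitable $d$; then $Fq$ is the intersection of the ideal $A$ with a supplementing subalgebra but is not an ideal.) The correct argument, which the paper gives, uses maximality of $M$ and minimality of $k$: since $N_{k-1}(L)\subseteq M$ we get $\phi_k(L)\subseteq M$, and $N_k(L)^2\subseteq\phi_k(L)$ because $N(X)^2\subseteq\phi(X)$ for the solvable algebra $X=L/N_{k-1}(L)$; only then does $[N_k(L)\cap M,\,N_k(L)]\subseteq N_k(L)^2\subseteq N_k(L)\cap M$ follow.

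The second genuine gap is (v), where you do not actually produce an argument: the observation $N(L)\subseteq N(M)$ is correct but the step from ``$\mathrm{ad}\,x$ is nilpotent on $N(L)$'' to ``$\mathrm{ad}\,x$ is nilpotent on $L$'' is exactly the content of the claim, and neither ``self-centralising-like enough'' nor ``meets the faithful part trivially'' closes it. The paper's proof is short and worth knowing: suppose $N(M)$ does not act nilpotently on $L$ and take the Fitting decomposition $L=L_0\dot{+}L_1$ of $L$ relative to the nilpotent subalgebra $N(M)$. Since $N(M)$ is a nilpotent ideal of $M$ it acts nilpotently on $M$, so $M\subseteq L_0$; as $L_0$ is a proper subalgebra (because $L_1\neq 0$) and $M$ is maximal, $L_0=M$. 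Then $[N(L),L_1]\subseteq[L_0,L_1]\cap N(L)\subseteq L_1\cap M=0$, so $L_1\subseteq C_L(N(L))\subseteq N(L)\subseteq M$, forcing $L_1=0$, a contradiction. You should adopt this (or an equivalent Fitting-component argument) for (v) and repair the ideal claim in (iv) as above.
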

\begin{proof} \begin{itemize} \item[(i)] This is a straightforward induction proof.
\item[(ii)] It is easy to see that this holds for $i=1$. So suppose it holds for $i<k$ ($k \geq 2$). Then there is an $r \in \N$ such that $(N_k(M)_L)^r \subseteq N_k(M)^r \subseteq N_{k-1}(M)$, so $(N_k(M)_L)^r \subseteq N_{k-1}(M)_L \subseteq N_{k-1}(L)$ by the inductive hypothesis. Thus $N_k(M)_L \subseteq N_k(L)$ and the result follows by induction.
\item[(iii)] This follows from (i) and (ii).
\item[(iv)] Suppose that $k$ is the smallest positive integer such that $N_k(L) \not \subseteq M$. Then $L=N_k(L)+M$ and $N_{k-1}(L) \subseteq M$, so $\phi_k(L) \subseteq M$. Moreover, $N_k^2 \subseteq \phi_k(L) \subseteq M$, by Lemma \ref{l:nat} and \cite{frat}. It follows that $N_k(L) \cap M$ is an ideal of $L$ and hence that $N_k(L) \cap M \subseteq N_k(M)_L$. The reverse inclusion follows as in (ii).
\item[(v)]  Let $N(L) \subseteq M$. We show that $N(M)$ acts nilpotently on $L$. Suppose not, and let $L=L_0 \dot{+} L_1$ be the Fitting decomposition of $L$ relative to $N(M)$. Then $L_0=M$ and $[N(L),L_1] \subseteq M \cap L_1=0$. Hence $L_1 \subseteq C_L(N(L)) \subseteq N(L)$, giving $L=M$, a contradiction.
\end{itemize}
\end{proof}
\bigskip

In general, over a field of characteristic $p>0$, $N(L)$ is not a characteristic ideal of $L$. The best known example is due to Jacobson and first appeared in \cite{sel}; however, it is not solvable. We shall see next that $N(L)$ is characteristic in $L$ whenever $\phi(L)$ is. First we need some lemmas.

\begin{lemma}\label{l:ab} Let $L$ be a Lie algebra over a field of characteristic $p>2$, let $I$ be an abelian ideal of $L$ and let $D$ be a derivation of $L$. Then $I+D(I)\subseteq N(L)$.
\end{lemma}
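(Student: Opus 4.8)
The plan is to show that $J:=I+D(I)$ is a nilpotent ideal of $L$; since $N(L)$ contains every nilpotent ideal of $L$, this gives $I+D(I)\subseteq N(L)$ at once. (That $I\subseteq N(L)$ is clear, so the real content is putting $D(I)$ into $N(L)$.) First I would check that $J$ is an ideal: for $x\in L$ and $a\in I$ the derivation identity gives $[Da,x]=D[a,x]-[a,Dx]$, and both $[a,x]$ and $[a,Dx]$ lie in $I$ because $I$ is an ideal, so $[L,D(I)]\subseteq D(I)+I=J$; together with $[L,I]\subseteq I$ this gives $[L,J]\subseteq J$.

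The algebraic core is to exploit that $I$ is abelian by differentiating the relation $[x,y]=0$ for $x,y\in I$. Applying $D$ once yields $[Dx,y]=[Dy,x]$; applying it again yields $2[Dx,Dy]=-[D^2x,y]-[x,D^2y]$, whose right-hand side lies in $I$ since $I$ is an ideal. This is the one place I expect to need the hypothesis $p>2$: dividing by $2$ gives $[Dx,Dy]\in I$ for all $x,y\in I$. Combined with $[I,I]=0$ and $[I,D(I)]\subseteq I$ this shows $[J,J]\subseteq I$; hence $J^k\subseteq I$ for all $k\ge2$, and since then $[I,J^k]\subseteq[I,I]=0$, the lower central series satisfies $J^{k+1}=[D(I),J^k]$ for $k\ge2$. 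Consequently $J$ is nilpotent as soon as the endomorphisms $\mathrm{ad}(Da)|_I$ $(a\in I)$ act nilpotently on the finite-dimensional space $I$ (on which they are well defined, $I$ being an ideal): iterating the recursion then gives $J^{\,\dim I+2}=0$.

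It remains to prove that each $\mathrm{ad}(Da)|_I$ is a nilpotent endomorphism. Since $[Da,Db]\in I$ acts as zero on $I$, the operators $\{\mathrm{ad}(Da)|_I:a\in I\}$ commute, so they form an abelian Lie algebra of endomorphisms of $I$ and by Engel's theorem it suffices that each of them be nilpotent. Here I would assume the ground field algebraically closed (nilpotency of an endomorphism being unaffected by field extension) and argue by contradiction: if $[Da,v]=\lambda v$ with $v\in I$, $v\ne0$, $\lambda\ne0$, then applying $D$ gives $\lambda Dv=[D^2a,v]+[Da,Dv]$, where $[D^2a,v]\in I$ because $I$ is an ideal and $[Da,Dv]\in I$ by what was shown above; hence $Dv\in I$. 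But then $Dv$ and $a$ both lie in the abelian ideal $I$, whence $0=[Dv,a]=[Da,v]=\lambda v$ by the first differentiated identity, forcing $v=0$ — a contradiction. So $\mathrm{ad}(Da)|_I$ has no non-zero eigenvalue and is nilpotent, $J$ is a nilpotent ideal, and $I+D(I)\subseteq N(L)$.

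The step I expect to be the main obstacle is this last one, the nilpotency of $\mathrm{ad}(Da)|_I$; the eigenvalue argument is the decisive trick, using the abelianness of $I$ a second time and being the only point where passing to the algebraic closure is really needed.
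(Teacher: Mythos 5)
Your argument is correct, and it is genuinely different in character from what the paper does: the paper's entire proof is a one-line citation of Maksimenko's result on outer derivations acting on nilpotent ideals (\cite[Theorem 1]{mak}), whereas you give a complete, self-contained elementary proof. All the steps check out: $J=I+D(I)$ is an ideal by the derivation identity; differentiating $[x,y]=0$ once gives $[Dx,y]=[Dy,x]$ and twice gives $2[Dx,Dy]\in I$, which is exactly where $p>2$ enters; hence $J^2\subseteq I$ and $J^{k+1}=[D(I),J^k]$ for $k\geq 2$; the operators $\mathrm{ad}(Da)|_I$ commute because $[Da,Db]\in I$ kills the abelian $I$; and the eigenvalue argument (passing to the algebraic closure, applying $D$ to $[Da,v]=\lambda v$ to force $Dv\in I$, then using $[Da,v]=[Dv,a]=0$) correctly shows each $\mathrm{ad}(Da)|_I$ is nilpotent, so Engel finishes the job with $J^{\dim I+2}=0$. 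What your route buys is transparency and a quantitative bound on the nilpotency class of $I+D(I)$, at the cost of length; what the paper's citation buys is brevity, outsourcing essentially these same differentiated-commutator identities to Maksimenko. One cosmetic remark: your argument works verbatim in characteristic $0$ as well (the paper handles that case elsewhere), and the only irreparable obstruction is characteristic $2$, consistent with Example \ref{e:nonchar}.
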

\begin{proof} This follows easily from \cite[Theorem 1]{mak}.
\end{proof}

\begin{lemma}\label{l:charfac} Let $I$ be a characteristic ideal of $L$, and let $D$ be a derivation of $L$. Then $\bar{D}:L/I\rightarrow L/I:x+I \mapsto D(x)+I$ is a derivation of $L/I$.
\end{lemma}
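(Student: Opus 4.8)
The plan is to verify directly that $\bar{D}$ is well defined and then that it satisfies linearity and the Leibniz identity; there is no real obstacle here beyond making sure the hypothesis that $I$ is characteristic is used in the right place.

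First I would record the only substantive point: since $I$ is a characteristic ideal of $L$, every derivation of $L$ maps $I$ into itself, so $D(I)\subseteq I$. This is exactly what is needed for $\bar D$ to be well defined. Indeed, if $x+I=y+I$ then $x-y\in I$, so $D(x)-D(y)=D(x-y)\in D(I)\subseteq I$, giving $D(x)+I=D(y)+I$. Hence $\bar D$ is a well-defined map $L/I\to L/I$.

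Next I would check linearity, which is immediate from the linearity of $D$ and of the quotient map: for $\alpha,\beta\in F$ and $x,y\in L$,
\[
\bar D\bigl(\alpha(x+I)+\beta(y+I)\bigr)=D(\alpha x+\beta y)+I=\alpha D(x)+\beta D(y)+I=\alpha\bar D(x+I)+\beta\bar D(y+I).
\]
Finally, for the Leibniz rule I would compute
\[
\bar D\bigl([x+I,y+I]\bigr)=D([x,y])+I=[D(x),y]+[x,D(y)]+I=[\bar D(x+I),y+I]+[x+I,\bar D(y+I)],
\]
using that $D$ is a derivation of $L$ and that $[x,y]+I=[x+I,y+I]$ in $L/I$. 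This establishes that $\bar D$ is a derivation of $L/I$, completing the proof. The step most worth stating explicitly is the well-definedness, since it is the only one that genuinely invokes the characteristic-ideal hypothesis; the remaining verifications are purely formal.
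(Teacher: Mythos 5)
Your proof is correct and is exactly the routine verification that the paper leaves to the reader (its proof reads only ``This is easy to check''). You rightly isolate the one substantive point, namely that $D(I)\subseteq I$ because $I$ is characteristic, which is what makes $\bar D$ well defined; the rest is formal, as you say.
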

\begin{proof} This is easy to check.
\end{proof}

\begin{propo}\label{p:phifree} Let $L$ be a $\phi$-free Lie algebra over a field of characteristic $p>2$. Then $N(L)$ is a characteristic ideal of $L$.
\end{propo}
\begin{proof} Since $L$ is $\phi$-free, $N(L)=A_1\oplus \ldots \oplus A_r$, where $A_1, \ldots, A_r$ are the minimal abelian ideals of $L$, by \cite[Theorem 7.4]{frat}. But $D(A_i)\subseteq N(L)$ for all $D\in$ Der$(L)$ and $i=1, \ldots, r$, whence the result.
\end{proof}

\begin{coro}\label{c:phifree}  Let $L$ be a Lie algebra over a field of characteristic $p>2$, and suppose that $\phi(L)$ is characteristic in $L$. Then $N(L)$ is characteristic in $L$.
\end{coro}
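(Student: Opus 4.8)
The plan is to reduce the statement to the $\phi$-free case already handled in Proposition \ref{p:phifree} by passing to the quotient $L/\phi(L)$. Set $B=\phi(L)$. Since $B$ is characteristic in $L$ by hypothesis, Lemma \ref{l:charfac} tells us that every derivation $D$ of $L$ induces a derivation $\bar D$ of $L/B$. The algebra $\bar L := L/B$ is $\phi$-free (indeed $\phi(L/\phi(L))=0$, since $\phi(L)$ is the intersection of the maximal subalgebras of $L$, and the maximal subalgebras of $L/\phi(L)$ correspond to those of $L$), so Proposition \ref{p:phifree} applies to $\bar L$: $N(\bar L)$ is a characteristic ideal of $\bar L$, hence $\bar D(N(\bar L))\subseteq N(\bar L)$ for every derivation $\bar D$ of $\bar L$, and in particular for every $\bar D$ induced from a derivation $D$ of $L$.

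Next I would identify $N(\bar L)$ with $N(L)/B$. This is exactly the statement $N(L)/\phi(L)=N(L/\phi(L))$, which is available from \cite[Theorem 5]{barnes} (and is used in the same form inside the proof of Lemma \ref{l:nat}, noting $\phi(L)\subseteq N(L)$ since $\phi(L)$ is a nilpotent ideal). So for any derivation $D$ of $L$ and any $x\in N(L)$ we get $\bar D(x+B)\in N(L)/B$, i.e.\ $D(x)+B\in N(L)/B$, which gives $D(x)\in N(L)$ because $B=\phi(L)\subseteq N(L)$. Hence $D(N(L))\subseteq N(L)$ for every $D\in\,$Der$(L)$, which is precisely the assertion that $N(L)$ is characteristic in $L$.

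The only delicate point is the interaction of the characteristic-ideal hypothesis on $\phi(L)$ with the quotient construction: one must be sure that a derivation of $L$ really does descend to $L/\phi(L)$, and this is exactly what Lemma \ref{l:charfac} guarantees once we know $\phi(L)$ is characteristic — so there is no genuine obstacle here, merely bookkeeping. The hypothesis $p>2$ enters only through its use in Proposition \ref{p:phifree} (ultimately through Lemma \ref{l:ab}), so it is inherited automatically. Thus the corollary follows essentially formally from Proposition \ref{p:phifree}, Lemma \ref{l:charfac}, and the identification $N(L/\phi(L))=N(L)/\phi(L)$.
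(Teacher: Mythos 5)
Your proposal is correct and is exactly the argument the paper intends: the paper's proof is simply the one-line remark that the corollary ``follows easily from Lemma \ref{l:charfac} and Proposition \ref{p:phifree}'', and your write-up supplies precisely the missing bookkeeping (descent of derivations to $L/\phi(L)$, $\phi$-freeness of the quotient, and the identification $N(L/\phi(L))=N(L)/\phi(L)$). No issues.
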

\begin{proof} This follows easily from Lemma \ref{l:charfac} and Proposition \ref{p:phifree}.
\end{proof}
\bigskip

However, there are solvable Lie algebras $L$ in which $N(L)$ is not a characteristic ideal, as the following example shows.

\begin{ex}\label{e:nonchar} Let $L$ be the four-dimensional Lie algebra with basis $x_1$, $x_2$, $x_3$, $x_4$ and non-zero products $[x_4,x_2]=x_1$, $[x_3,x_1]=x_1$ and $[x_3,x_2]=x_2$ over a field $F$ of characteristic $2$. Then $N(L)=Fx_1+Fx_2+Fx_4$ and if $D(x_1)=x_2$, $D(x_2)=0$, $D(x_3)=0$, $D(x_4)=x_3$ then the extension of $D$ to $L$ by linearity is a derivation of $L$. Clearly, $D(N(L))=Fx_2+Fx_3$. Note that $\phi(L)=Fx_1$, so this leaves open the question of whether Proposition \ref{p:phifree} holds over a field of characteristic $2$.
\par

If we form the split extension $X=Fd\dot{+} L$, where $[d,x]=D(x)$ for all $x\in L$, then $L$ is an ideal of $X$, but $N(L)$ is not. 
\end{ex}

Consequently, we shall need the following result from \cite{nmax}.

\begin{propo}\label{p:nil} Let $I$ be a nilpotent subideal of a Lie algebra $L$ over a field $F$. If $F$ has characteristic zero, or has characteristic $p$ and $L$ has no subideal with nilpotency class greater than or equal to $p-1$, then $I \subseteq N$, where $N$ is the nilradical of $L$. 
\end{propo}

Let $L$ be a Lie algebra over a field $F$ and let $U$ be a subalgebra of $L$. We call the largest integer $r$ such that $N_{r}(L)\subseteq U$ the {\em compatibility index} of $U$. As in \cite{gennil}, if $F$ has characteristic $p>0$, we will call $U$ {\em nilregular} if the nilradical of $U$  has nilpotency class less than $p-1$. If $U$ has compatibility index $r$, we say that $U$ is {\em strongly nilregular} if $N_k(U)/N_{k-1}(U)$ has nilpotency class less than $p-1$ for $k=1, \ldots, r$. If $F$ has characteristic zero we regard every subalgebra of $L$ as being nilregular. Then we have the following result.

\begin{propo}\label{p:nilregular} If $I$ is a nilregular ideal of $L$ then $N(I)\subseteq N(L)$.
\end{propo}
\begin{proof} This is \cite[Proposition 2.1]{gennil}.
\end{proof}
\bigskip

We shall call $L$ {\em primitive} if it has a core-free maximal subalgebra. It is said to be {\em primitive of type $1$} if it has a unique minimal ideal that is abelian; since $L$ is solvable this is the only type that can occur here (see \cite{chief}). If $B$ is an ideal of $L$ and $U/B$ is a subalgebra of $L/B$, the {\em centraliser} of $U/B$ in $L$ is $C_L(U/B)=\{x\in L : [x,U]\subseteq B\}$.

\begin{propo}\label{p:nmax} Let $L$ be a solvable Lie algebra over a field $F$, and let $M$ be a maximal subalgebra of $L$ of compatibility index $r$. If $M_L$ is strongly nilregular, then $N_i(M)=N_i(M_L)=N_i(L)$ for $i=1,\ldots , r$.
\end{propo}
\begin{proof} First we show that if $M_L$ is nilregular then $N(M)=N(M_L)=N(L)$. We have that $N(M)$ acts nilpotently on $L$, by Lemma \ref{l:max} (v). Now $L/M_L$ is primitive of type $1$, and so $L/M_L=A/M_L \dot{+} M/M_L$, where $A/M_L$ is the unique minimal ideal of $L/M_L$ and is self-centralising, by \cite[Theorem 1.1]{chief}. Since $L=A+M$, we have that $A+N(M)$ is an ideal of $L$, and so $[A,N(M)]+M_L=[A,A+N(M)]+M_L=M_L$ or $A$, since $A/M_L$ is a chief factor of $L$. 
\par

The former implies that $[A,N(M)] \subseteq M_L$, whence 
\[ \frac{N(M)+M_L}{M_L} \subseteq C_{L/M_L}\left( \frac{A}{M_L} \right) = \frac{A}{M_L}.
\] Thus $N(M) \subseteq A \cap M \subseteq M_L$. 
\par

The latter gives that $[A,N(M)]+M_L=A$. But then an easy induction shows that $A \subseteq A$ (ad\,$N(M))^r +M_L$ for every $r \in \N$, whence $A=M_L$ since $N(M)$ acts nilpotently on $L$. But this is impossible, so $N(M) \subseteq M_L$. 
\par

It follows that $N(M)\subseteq N(M_L)$ and hence $N(M) \subseteq N(L)$, by Proposition \ref{p:nilregular}. Hence  $N(M)=N(M_L)=N(L)$.
\par

So suppose now that $N_k(M)=N_k(M_L)=N_k(L)$ for some $1\leq k<r$. Then $(M/N_k(L))_L=M_L/N_k(M_L)$ is nilregular, so, by the above,
\[ N\left(\frac{M}{N_k(M)}\right)=N\left(\frac{M}{N_k(L)}\right) =N\left(\frac{M_L}{N_k(M_L)}\right)=N\left(\frac{L}{N_k(L)}\right),
\] whence
\[ \frac{N_{k+1}(M)}{N_k(M)}=\frac{N_{k+1}(M_L)}{N_k(M_L)}=\frac{N_{k+1}(L)}{N_k(L)}
\] and $N_{k+1}(M)=N_{k+1}(M_L)=N_{k+1}(L)$. The result follows by induction.
\end{proof}
\bigskip

Note that the above result is not true for all maximal subalgebras, as is shown in the next example.

\begin{ex}\label{e:nonchar2} Let $X$ be as in Example \ref{e:nonchar}. Then $M=Fd+Fx_1+Fx_2$ and $L$ are both maximal subalgebras of $X$ of compatibility index $1$. However, $N(L) \neq N(X)=Fx_1+Fx_2$, and $M_X=Fx_1+Fx_2$, so $N(M_L)=M_L\neq M=N(M)$.
\end{ex}

Let ${\mathcal N(k)}$, ${\mathcal N({\leq k)}}$ denote the classes of Lie algebras of nilpotent length $k$ and of nilpotent length $\leq k$ respectively. Of course, over a field of characteristic zero, every Lie algebra $L \in {\mathcal N({\leq 2})}$. However, over a field of characteristic $p>0$ it is easy to construct Lie algebras $L \in {\mathcal N(k)}$ for any $k \in \N$.
\par

A class ${\mathcal H}$ of finite-dimensional solvable Lie algebras is called a {\em homomorph} if ${\mathcal H}$ contains, along with an algebra $L$, all epimorphic
images of $L$. A homomorph ${\mathcal H}$ is called a {\em formation} if $L/A$, $L/B\in {\mathcal H}$ implies that $L/A\cap B \in {\mathcal H}$, where $A$, $B$ are ideals of  $L$. A formation ${\mathcal H}$ is said to be {\em saturated} if $L/\phi(L) \in {\mathcal H}$ implies that $L \in {\mathcal H}$. 

\begin{propo}\label{p:satfor} The class ${\mathcal N({\leq k})}$ is saturated formation for each $k \geq 1$. 
\end{propo}
\begin{proof} It is shown that ${\mathcal N(1)}$ is a saturated formation in \cite[Lemma 3.7]{b-g-h}. Suppose that it holds for $k=r$. Then ${\mathcal N}(\leq r+1)$ is clearly a homomorph. Suppose that $L/A$, $L/B \in {\mathcal N}(\leq r+1)$. Let $S/A=N(L/A)$ and $T/B=N(L/B)$. Then $L/S$, $L/T \in {\mathcal N}(\leq r)$, so $L/S\cap T \in {\mathcal N}(\leq r)$. But there exist $m$, $n\in \N$ such that $S^m\subseteq A$ and $T^n\subseteq B$, so $(S\cap T)^{m+n}\subseteq A\cap B$. Hence $L/A\cap B \in {\mathcal N}(\leq r+1)$, and ${\mathcal N}(\leq r+1)$ is a formation.
\par

Suppose now that $L/\phi(L) \in {\mathcal N}(\leq r+1)$. Then $N(L/\phi(L))=N(L)/\phi(L)$, by \cite[Theorem 6.1]{frat}, so $L/N(L) \in {\mathcal N}(\leq r)$ and $L \in {\mathcal N}(\leq r+1)$. It follows that ${\mathcal N}(\leq r+1)$ is saturated.
\end{proof}

\begin{coro}\label{c:satfor} Let $L \in {\mathcal N(k)}$ have more than one minimal ideal. Then there is at least one minimal ideal $A$ of $L$ such that $L/A \in {\mathcal N(k)}$.
\end{coro}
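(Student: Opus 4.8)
The plan is to exploit the fact, just established in Proposition~\ref{p:satfor}, that ${\mathcal N}(\leq k)$ is a formation. Suppose $L \in {\mathcal N}(k)$ has minimal ideals $A_1, \ldots, A_m$ with $m \geq 2$, and suppose for contradiction that $L/A_j \in {\mathcal N}(\leq k-1)$ for \emph{every} $j$; note that since $n(L/A_j) \leq n(L) = k$ always, assuming $L/A_j \notin {\mathcal N}(k)$ forces $L/A_j \in {\mathcal N}(\leq k-1)$. Pick two distinct minimal ideals $A_1, A_2$. Their intersection $A_1 \cap A_2$ is an ideal contained in each, hence is $0$ by minimality. Then $L = L/(A_1 \cap A_2)$ embeds in $L/A_1 \oplus L/A_2$, and since ${\mathcal N}(\leq k-1)$ is a formation, $L/(A_1 \cap A_2) = L \in {\mathcal N}(\leq k-1)$, contradicting $n(L) = k$. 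Hence at least one $A_j$ must have $L/A_j \notin {\mathcal N}(\leq k-1)$, i.e. $L/A_j \in {\mathcal N}(k)$, since nilpotent length of a quotient cannot exceed that of the whole algebra.

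The one genuinely substantive point to verify — and the only place I would slow down — is the claim that $n(L/A) \leq n(L)$ for any ideal $A$ of $L$: that is, quotients do not increase nilpotent length. This is standard but should be stated. It follows because the image of the upper nilpotent series of $L$ under the projection $L \to L/A$ gives a chain of ideals of $L/A$ whose successive quotients are nilpotent images of the $N_i(L)/N_{i-1}(L)$, hence nilpotent; so $N_i(L/A) \supseteq (N_i(L) + A)/A$ by an easy induction, and thus $N_{n(L)}(L/A) = L/A$. Once this is in hand, the dichotomy "$L/A_j \in {\mathcal N}(k)$ or $L/A_j \in {\mathcal N}(\leq k-1)$" is immediate.

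There is no real obstacle here; the corollary is a direct consequence of the formation property together with the triviality that distinct minimal ideals intersect in zero. The subtlety one should be careful about is purely bookkeeping: making sure that "more than one minimal ideal" is used only to produce two with trivial intersection, and that the conclusion is stated as the existence of \emph{at least one} such $A$ (not all of them). I would write the argument in three or four lines: fix $A_1 \neq A_2$ minimal, observe $A_1 \cap A_2 = 0$, apply the formation property of ${\mathcal N}(\leq k-1)$ to conclude $L \cong L/(A_1\cap A_2) \hookrightarrow L/A_1 \oplus L/A_2 \in {\mathcal N}(\leq k-1)$ would hold if both quotients were in ${\mathcal N}(\leq k-1)$, which is false, and read off that one of them lies in ${\mathcal N}(k)$.
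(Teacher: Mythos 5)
Your proposal is correct and follows essentially the same route as the paper: the paper's proof is exactly the observation that if $L/A_i \in {\mathcal N}(\leq k-1)$ for all the minimal ideals $A_i$, then the formation property from Proposition~\ref{p:satfor} (applied to two of them, which intersect trivially) forces $L \in {\mathcal N}(\leq k-1)$, a contradiction. Your extra remark verifying that $n(L/A) \leq n(L)$, so that the dichotomy between ${\mathcal N}(k)$ and ${\mathcal N}(\leq k-1)$ holds for quotients, is a sensible piece of care that the paper leaves implicit.
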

\begin{proof} If $A_1, \ldots, A_n$ are minimal ideals of $L$, where $n>1$, and $L/A_i \in {\mathcal N({\leq k-1})}$ for all $1 \leq k \leq n$, then $L \in {\mathcal N({\leq k-1})}$, by Proposition \ref{p:satfor}.
\end{proof} 

\begin{propo}\label{p:max}  Let $L$ be a solvable Lie algebra over a field $F$. If $M$ is a maximal subalgebra of $L$ for which $M_L$ is strongly nilregular,  then $n(M)=n(L)-i$ where $i \in \{0,1\}$.
\end{propo}
\begin{proof} Let $r$ be the compatibility index of $M$. Then $L=M+N_{r+1}(L)$ and $L/N_{r+1}(L)\cong M/M\cap N_{r+1}(L)=M/N_{r+1}(M)_L$, by Lemma \ref{l:max} (iv). Now $N_r(L)=N_r(M)$ by Proposition \ref{p:max}, so $N_r(M)\subseteq N_{r+1}(M)_L\subseteq N_{r+1}(M)$. It follows from Lemma \ref{l:quot} that $n(M/N_{r+1}(M)_L)=n(M)-r-1$ or $n(M)-r$. Hence $n(L)-r-1=n(M)-r-1$ or $n(M)-r$, which gives the result.
\end{proof}

\begin{lemma}\label{l:decomp} Let $L \in {\mathcal N}(n)$. Then we can write $L=N_k(L)+U_k$, where $U_k$ is a subalgebra of $L$, $N_k(L)\cap U_k \subseteq \phi(U_k)=\phi_{k+1}(L)\cap U_k$, $U_k\subseteq U_{k-1}$ for each $k=1,\ldots,n$. Moreover, $N(U_k)=N_{k+1}(L)\cap U_k$  for each $k=0,\ldots,n-1$.
\end{lemma}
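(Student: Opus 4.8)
The plan is to construct the subalgebras by induction on $k$, running from $k=0$ with $U_0=L$ up to $k=n$, and to carry \emph{all} of the stated identities along as the inductive hypothesis: at stage $k$ one wants $L=N_k(L)+U_k$, $N_k(L)\cap U_k\subseteq\phi(U_k)=\phi_{k+1}(L)\cap U_k$, and $N(U_k)=N_{k+1}(L)\cap U_k$. The last identity is the ``moreover'' clause, and it must be part of the induction rather than deduced afterwards, since it is precisely what tells us the nilradical of $U_{k-1}$ before we pass to the next stage. For $k=0$ all three are immediate because $N_0(L)=0$, $\phi_1(L)=\phi(L)$ and $N_1(L)=N(L)$.

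The one new ingredient I would isolate as a claim is: \emph{in any finite-dimensional solvable Lie algebra $V$ with nilradical $M=N(V)$ there is a subalgebra $W$ with $V=M+W$ and $M\cap W\subseteq\phi(W)$}. This has a short proof: choose $W$ of smallest dimension among subalgebras satisfying $V=M+W$ (legitimate since $W=V$ qualifies); as $M$ is an ideal of $V$, $M\cap W$ is an ideal of $W$, so were $M\cap W\not\subseteq\phi(W)$ there would be a maximal subalgebra $W_0$ of $W$ with $W=(M\cap W)+W_0$, giving $V=M+W_0$ and contradicting minimality. For the inductive step I would apply this to $V=U_{k-1}$, whose nilradical is $N(U_{k-1})=N_k(L)\cap U_{k-1}$ by the inductive hypothesis, obtaining $U_k\subseteq U_{k-1}$ with $U_{k-1}=(N_k(L)\cap U_{k-1})+U_k$ and $(N_k(L)\cap U_{k-1})\cap U_k\subseteq\phi(U_k)$. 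Since $L=N_{k-1}(L)+U_{k-1}$ and $N_{k-1}(L)\subseteq N_k(L)$, this at once gives $L=N_k(L)+U_k$, and intersecting with $N_k(L)$ gives $N_k(L)\cap U_k\subseteq\phi(U_k)$; at the top stage $U_{n-1}$ equals its own nilradical, so $U_n=0$ and everything degenerates harmlessly.

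It then remains to upgrade the two facts just obtained to the full Frattini- and nilpotent-series identities, and the key observation here is that $L=N_k(L)+U_k$ yields an isomorphism $U_k/(N_k(L)\cap U_k)\cong L/N_k(L)$. Because $N_k(L)\cap U_k\subseteq\phi(U_k)$, Lemma~\ref{l:nat} applied inside $U_k$ gives $\phi\bigl(U_k/(N_k(L)\cap U_k)\bigr)=\phi(U_k)/(N_k(L)\cap U_k)$ and $N\bigl(U_k/(N_k(L)\cap U_k)\bigr)=N(U_k)/(N_k(L)\cap U_k)$; transporting these along the isomorphism they become $\phi(L/N_k(L))=\phi_{k+1}(L)/N_k(L)$ and $N(L/N_k(L))=N_{k+1}(L)/N_k(L)$, and pulling back to $U_k$ yields exactly $\phi(U_k)=\phi_{k+1}(L)\cap U_k$ and $N(U_k)=N_{k+1}(L)\cap U_k$, completing the stage. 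I do not expect any single step to be the real difficulty; rather the care needed is in the bookkeeping --- ensuring the ``moreover'' identity is in hand before the auxiliary claim is invoked, and noticing that the freely-obtained inclusion $N_k(L)\cap U_k\subseteq\phi(U_k)$ is exactly the hypothesis that licenses Lemma~\ref{l:nat}. In particular no characteristic-$p$ subtlety arises, since every quotient that is formed is by an ideal lying inside a Frattini subalgebra.
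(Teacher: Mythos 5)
Your proposal is correct and follows essentially the same route as the paper: the same iterated Frattini-supplement construction (your auxiliary claim is exactly the cited \cite[Lemma 4.1]{frat}, which the paper applies to the ideal $N_{k}(L)\cap U_{k-1}$ of $U_{k-1}$, the algebra's nilradical under your inductive hypothesis), followed by the same isomorphism $U_k/(N_k(L)\cap U_k)\cong L/N_k(L)$ to identify $\phi(U_k)$ with $\phi_{k+1}(L)\cap U_k$. The only organisational difference is that you also obtain $N(U_k)=N_{k+1}(L)\cap U_k$ by transporting the nilradical along this isomorphism, whereas the paper proves that identity by a separate two-inclusion argument using \cite[Theorem 6.1]{frat}; both versions rest on the same underlying fact that the nilradical is preserved modulo an ideal contained in the Frattini subalgebra.
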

\begin{proof} Put $U_0=L$. Then $L=N_1(L)+ U_1$ for some subalgebra $U_1$ of $L$ with $N(L)\cap U_1\subseteq \phi(U_1)$, by \cite[Lemma 4.1]{frat}. Having constructed $U_j$ we construct $U_{j+1}$ such that $U_j=N_{j+1}(L)\cap U_j+U_{j+1}$ and $N_{j+1}(L)\cap U_{j+1}\subseteq \phi(U_{j+1})$, which we can do, by using \cite[Lemma 4.1]{frat} again. Now, it is easy to see inductively that $L=N_k(L)+U_k$, $N_k(L)\cap U_k\subseteq \phi(U_k)$ and $U_k\subseteq U_{k-1}$  for each $k=1,\ldots,n$. Furthermore
\begin{align}
\frac{\phi_{k+1}(L)\cap U_k}{N_k(L)\cap U_k} & \cong \frac{N_k(L)+\phi_{k+1}(L)\cap U_k}{N_k(L)}= \frac{\phi_{k+1}(L)}{N_k(L)} \nonumber \\
  & = \phi \left(\frac{L}{N_k(L)}\right) \cong \phi\left(\frac{U_k}{N_k(L)\cap U_k}\right)= \frac{\phi(U_k)}{N_k(L)\cap U_k}, \nonumber
\end{align}
so $\phi(U_k)=\phi_{k+1}(L)\cap U_k$.
\par

Clearly $N_k(L)+N(U_k)\subseteq N_{k+1}(L)$ so $N(U_k)\subseteq N_{k+1}(L)\cap U_k$. But also, there is a natural number $r$ such that 
\[ (N_{k+1}(L)\cap U_k)^r \subseteq N_k(L)\cap U_k \subseteq \phi(U_k),
\] so $N_{k+1}(L)\cap U_k/\phi(U_k)$ is nilpotent. It follows from \cite[Theorem 6.1]{frat} that $N_{k+1}(L)\cap U_k$ is a nilpotent ideal of $U_k$, so $N_{k+1}(L)\cap U_k \subseteq N(U_k)$ and equality results.
\end{proof}
\bigskip

Next we have a fundamental decomposition result.

\begin{theor}\label{t:decomp} Let $L \in {\mathcal N}(n)$ if and only if there are nilpotent subalgebras $B_i$ of $L$ for $i=1,\ldots,n$ such that
\begin{itemize}
\item[(i)] $N_i(L)=B_1+\ldots +B_i$ for $i=1,\ldots, n$, 
\item[(ii)] $L=B_1+\ldots +B_n$, 
\item[(iii)] $[B_i,B_j]\subseteq B_i$ for $1\leq i\leq j\leq n$, and 
\item[(iv)]$N_i(L)\cap U_i\subseteq \phi(U_i)=\phi_{i+1}(L)\cap U_i$, where $U_i= B_{i+1}+\ldots +B_n$,  for $i=1,\ldots,n-1$. 
\end{itemize}
\end{theor}
\begin{proof} Let $L \in {\mathcal N}(n)$, $U_i$ be as in Lemma \ref{l:decomp} and put $B_i=N(U_{i-1})$, where $U_0=L$.
\begin{itemize}
\item[(i)] Clearly $B_1=N_1(L)$. Suppose that $B_1+ \ldots +B_k=N_k(L)$. Then
\[ B_1+\ldots +B_{k+1}=N_k(L)+N(U_k)=N_k(L)+N_{k+1}(L)\cap U_k=N_{k+1}(L).
\]
\item[(ii)] $B_1+\ldots +B_n=N_n(L)=L$.
\item[(iii)] $[B_i,B_j]=[N(U_i),N(U_j)]\subseteq [N(U_i),U_i] \subseteq N(U_i)=B_i$.
\item[(iv)] We have $U_k=U_{k+1}+N_{k+1}(L)\cap U_k=U_{k+1}+N(U_k)=U_{k+1}+B_{k+1}$. Hence
$U_i=U_{i+1}+B_{i+1}=U_{i+2}+B_{i+2}+B_{i+1}= \dots = B_{i+1}+\dots +B_n$, and $N_i(L)\cap U_i\subseteq \phi(U_i)$ from Lemma \ref{l:decomp}.
\end{itemize}

The converse is clear.
\end{proof}

\section{Extreme Lie Algebras} 
The Lie algebra $L$ is {\em monolithic} if it has a unique minimal ideal $A$, the {\em monolith} of $L$. If $B$ is an ideal of $L$ and $A/B$ is a minimal ideal of $L/B$ we say that $A/B$ is a {\em chief factor} of $L$. The series
\[ \{0\} = A_0 \subset A_1 \subset \ldots \subset A_n=L
\] is called a {\em chief series} if $A_i/A_{i-1}$ is a chief factor of $L$ for each $1\leq i\leq n$.

\begin{lemma}\label{l:chief} Let $L$ be a Lie algebra such that $N(L)/\phi(L)$ is a chief factor of $L$. Then $L$ has at most one complemented minimal ideal, and if $A$ is one such, then $\phi(L/A)=\phi_2(L)/A$.
\end{lemma}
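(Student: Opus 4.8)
\section*{Proof proposal for Lemma \ref{l:chief}}

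The plan is to isolate a few soft facts about minimal ideals, prove the second assertion by a maximal‑subalgebra correspondence, and attack the first assertion by reducing it to a statement purely about the $L$‑module structure of $\phi(L)$ and then inducting on $\dim L$.

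First I would record the elementary observations. Any minimal ideal $A$ of the solvable algebra $L$ is abelian, hence contained in $N(L)$; moreover $A$ is complemented in $L$ if and only if $A\not\subseteq\phi(L)$ (the backward implication picks a maximal subalgebra missing $A$, and $A\cap M$ is an ideal contained in $A$). Now suppose $A$ is a complemented minimal ideal. Then $(A+\phi(L))/\phi(L)$ is a nonzero ideal of $L/\phi(L)$ contained in the chief factor $N(L)/\phi(L)$, so $A+\phi(L)=N(L)$; combined with $A\cap\phi(L)=0$ this gives $A\cong N(L)/\phi(L)$ as $L$‑modules and $N(L)=A\,\dot{+}\,\phi(L)$.

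\emph{Second assertion.} Since $A+\phi(L)=N(L)$, a maximal subalgebra of $L$ contains $A$ if and only if it contains $\phi(L)$, i.e.\ if and only if it contains $N(L)$. Hence the maximal subalgebras of $L/A$ correspond bijectively to the maximal subalgebras of $L/N(L)$, so $\phi(L/A)$ is the image in $L/A$ of the preimage in $L$ of $\phi(L/N(L))=\phi_2(L)/N(L)$, namely $\phi_2(L)/A$; note $A\subseteq N(L)=N_1(L)\subseteq\phi_2(L)$, so this quotient makes sense. This step is routine.

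\emph{First assertion.} Suppose $L$ had two distinct complemented minimal ideals $A\neq B$. Then $A\cap B=0$, both $A,B\cong N(L)/\phi(L)$, and $N(L)=A\,\dot{+}\,\phi(L)=B\,\dot{+}\,\phi(L)$. Put $W=(A+B)\cap\phi(L)$, an ideal of $L$ contained in $\phi(L)$; a short computation (write $b\in B\subseteq N(L)=A\,\dot{+}\,\phi(L)$) shows $A+W=A+B$, whence $A\cap W=0$ forces $W\cong(A+B)/A\cong B\cong N(L)/\phi(L)$, so $W$ is a nonzero minimal ideal of $L$ contained in $\phi(L)$ and $L$‑isomorphic to $N(L)/\phi(L)$. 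Conversely such a $W$, together with any complemented minimal ideal $A$, produces a second one (the graph of an $L$‑isomorphism $A\to W$ inside $A\,\dot{+}\,W$), so the first assertion is \emph{equivalent} to: $\phi(L)$ contains no minimal ideal $L$‑isomorphic to $N(L)/\phi(L)$. I would prove this by induction on $\dim L$ ($\phi$‑free $L$ being trivial, since then $N(L)$ is the sum of the minimal ideals and equals the chief factor $N(L)/\phi(L)$, so there is a unique minimal ideal). For a minimal ideal $X\subseteq\phi(L)$, Lemma \ref{l:nat} gives $N_i(L/X)=N_i(L)/X$ and $\phi_i(L/X)=\phi_i(L)/X$, so $L/X$ again satisfies the hypothesis and has smaller dimension; and $C\mapsto(C+X)/X$ carries complemented minimal ideals of $L$ to complemented minimal ideals of $L/X$. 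If $\phi(L)$ contains a minimal ideal $X$ \emph{not} $L$‑isomorphic to $N(L)/\phi(L)$, this map is injective (from $A+X=B+X$ and $A\not\cong X$ one gets $B=A$ by looking at submodules of $A\oplus X$), so the inductive hypothesis on $L/X$ bounds $L$; and if instead all such $X$ are $L$‑isomorphic to $N(L)/\phi(L)$ but there is one, say $X$, different from $W$, then $(A+X)/X\neq(B+X)/X$ in $L/X$ (equality would force $W\subseteq X$), again contradicting the inductive hypothesis for $L/X$.

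The main obstacle is the remaining case, in which $W$ is the \emph{only} minimal ideal of $L$ contained in $\phi(L)$, i.e.\ $\mathrm{Soc}_L(\phi(L))=W\cong N(L)/\phi(L)$, with $\mathrm{Soc}_L(N(L))=A\,\dot{+}\,W$. Here I would argue for a contradiction directly with the hypothesis $W\subseteq\phi(L)$, by producing a maximal subalgebra of $L$ avoiding $W$: writing $L=A\,\dot{+}\,M$ with $M$ a complement of $A$ (so $\phi(L)=M\cap N(L)\supseteq W$), the idea is to "exchange $A$ for $W$", using that $A\,\dot{+}\,W$ is the socle of $N(L)$ and $A$ is complemented. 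Alternatively one can exploit that $L/\phi(L)$ is primitive of type $1$, so $N(L)/\phi(L)$ is a faithful irreducible $L/N(L)$‑module and hence $C_L(W)=N(L)=C_L(A)$, and play this self‑centralising property against $W\subseteq\phi(L)$. This last step is where the real work lies; the reductions above are essentially formal.
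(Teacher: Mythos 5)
Your proof of the second assertion is correct and is essentially the paper's argument in a slightly cleaner form: since every maximal subalgebra contains $\phi(L)$ and $A+\phi(L)=N(L)$, a maximal subalgebra contains $A$ if and only if it contains $N(L)$, and intersecting over these gives $\phi(L/A)=\phi_2(L)/A$. Your reduction of the first assertion to the statement that $\phi(L)$ contains no minimal ideal $L$-isomorphic to $N(L)/\phi(L)$ is also sound, as is the induction on $\dim L$ in the two cases you treat. But the proof is not complete: the case you yourself flag --- where $W=(A+B)\cap\phi(L)$ is the \emph{only} minimal ideal of $L$ inside $\phi(L)$ --- is precisely the hard core of the lemma, and neither of your two suggested attacks is carried out. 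The first (``produce a maximal subalgebra avoiding $W$'') cannot work as stated, since $W\subseteq\phi(L)$ forces every maximal subalgebra to contain $W$; the second (``play $C_L(W)=N(L)$ against $W\subseteq\phi(L)$'') is the right instinct but is left as a hope rather than an argument. So there is a genuine gap.

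The missing idea --- and the way the paper closes the argument --- is to feed the second assertion back into the first. From $\phi(L/A)=\phi_2(L)/A$, together with $N(L/\phi(L))=N(L)/\phi(L)$, one obtains $N(L/A)=N_2(L)/A$. Any minimal ideal $B\neq A$ then yields a minimal ideal $(A+B)/A$ of $L/A$ lying inside the nilpotent ideal $N_2(L)/A$, hence centralised by it, so $[N_2(L),B]\subseteq A\cap B=0$, i.e. $N_2(L)\subseteq C_L(B)$. If $B$ were also complemented, then $B+\phi(L)=N(L)$ and $B\cap\phi(L)=0$ give $B\cong N(L)/\phi(L)$ as $L$-modules; since this chief factor is self-centralising, $C_L(N(L)/\phi(L))=N(L)$ by \cite[Theorem 7.4]{frat}, so $N_2(L)\subseteq N(L)$, which is impossible unless $L$ is nilpotent (and then the hypothesis forces $\dim L=1$, where the claim is trivial). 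This single computation disposes of all your cases at once, including the outstanding one, and explains why the lemma establishes the two assertions in the order it does.
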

\begin{proof} If $\phi(L)=0$, then $N(L)$ is the monolith and the result follows easily from \cite[Theorems 7.3 and 7.4]{frat}. So assume that $\phi(L) \neq 0$ and let $A$ be a complemented minimal ideal of $L$. Then $A \not \subseteq \phi(L)$, so $A+\phi(L)=N(L)$. Let $M$ be a maximal subalgebra of $L$, and suppose that $\phi_2(L) \not \subseteq M$. Then $N(L) \not \subseteq M$, so $L=M+N(L)= M+A+\phi(L)$, giving $M+A=L$. Thus $A \not \subseteq M$. It follows that every maximal subalgebra of $L/A$ contains $\phi_2(L)/A$. Put $T/A= \phi(L/A)$. Then $\phi_2(L) \subseteq T$.
\par

Let $M/N(L)$ be a maximal subalgebra of $L/N(L)$ and suppose that $(T+N(L))/N(L) \not \subseteq M/N(L)$. Then $M+T+N(L)=M+T=L$. But now $T \not \subseteq M$, so $\phi(L/A) \not \subseteq M$, whence $A \not \subseteq M$. It follows that $L=M+A=M$, a contradiction. Hence $\phi_2(L)/N(L)= \phi(L/N(L)) \supseteq (T+N(L))/N(L)$, which yields that $T= \phi_2(L)$.
\par

Now suppose that $B$ is another minimal ideal of $L$ with $B \neq A$. Then
\[ B \cong (A+B)/A \subseteq N_2(L)/A=N(L/A),
\]
by the above. It follows that $N_2(L) \subseteq C_L(B)$. Suppose that $B \not \subseteq \phi(L)$. Then, as before, we have that $B+ \phi(L)=N(L)$ and hence that $B \cong N(L)/\phi(L)$. But $C_L(N(L)/\phi(L))=N(L)$, by \cite[Theorem 7.4]{frat}, since $N(L/\phi(L))=N(L)/\phi(L)$. This yields that $N(L)=C_L(B)$, a contradiction. Hence $B \subseteq \phi(L)$ and $L$ has, at most, one complemented minimal ideal.
\end{proof}
\bigskip

We call $L$ {\em extreme} if $N_i(L)/\phi_i(L)$ is a chief factor of $L$ for each $i=1,2, \ldots , n(L).$ 

\begin{lemma}\label{l:factor} Every factor algebra of an extreme Lie algebra is extreme.
\end{lemma}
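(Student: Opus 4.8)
The plan is to reduce to quotients by a minimal ideal, using an induction on $\dim L$: if $K$ is a non-zero ideal of the extreme algebra $L$, pick a minimal ideal $A$ of $L$ with $A\subseteq K$; granting that $L/A$ is again extreme, it has strictly smaller dimension, and $L/K\cong (L/A)/(K/A)$ is a quotient of $L/A$, so it is extreme by the inductive hypothesis (the base case $\dim L\le 1$ being immediate). So it suffices to show that $L/A$ is extreme when $A$ is a minimal ideal of $L$. Since $L$ is solvable, $A$ is abelian, hence a nilpotent ideal, so $A\subseteq N(L)$.

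If $A\subseteq\phi(L)$, I would invoke Lemma~\ref{l:nat} with $B=A$ to get $N_i(L/A)=N_i(L)/A$ and $\phi_i(L/A)=\phi_i(L)/A$ for all $i$; hence $n(L/A)=n(L)$ and $N_i(L/A)/\phi_i(L/A)\cong N_i(L)/\phi_i(L)$, which is a chief factor of $L$ and, since $A\subseteq\phi(L)\subseteq\phi_i(L)$, a chief factor of $L/A$. Thus $L/A$ is extreme in this case.

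The main case is $A\not\subseteq\phi(L)$, so that $A$ is a complemented minimal ideal of $L$. Since $L$ is extreme, $N(L)/\phi(L)=N_1(L)/\phi_1(L)$ is a chief factor, so Lemma~\ref{l:chief} applies and gives $\phi(L/A)=\phi_2(L)/A$. To identify $N(L/A)$: the image of the nilpotent ideal $N(L/A)$ under $L/A\twoheadrightarrow L/N(L)$ (possible as $A\subseteq N(L)$) is a nilpotent ideal of $L/N(L)$, hence lies in $N(L/N(L))=N_2(L)/N(L)$, so $N(L/A)\subseteq N_2(L)/A$; and $N(L)/A\subseteq\phi_2(L)/A=\phi(L/A)\subseteq N(L/A)$ since $N(L)=N_1(L)\subseteq\phi_2(L)$. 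Therefore $N(L/A)/(\phi_2(L)/A)$ is an ideal of $L/\phi_2(L)$ contained in the chief factor $N_2(L)/\phi_2(L)$, so $N(L/A)$ is either $\phi_2(L)/A$ or $N_2(L)/A$. The former would give $N(L/A)=\phi(L/A)$, which is impossible: $L/A$ is a non-zero solvable algebra, so its Frattini quotient is a non-zero solvable algebra, and by \cite[Theorem 6.1]{frat} its nilradical would be $N(L/A)/\phi(L/A)=0$, contradicting the fact that a non-zero solvable Lie algebra has non-zero nilradical. Hence $N(L/A)=N_2(L)/A$ and $\phi(L/A)=\phi_2(L)/A$.

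Finally, a routine induction on $i$ (using the isomorphism $(L/A)/(N_i(L)/A)\cong L/N_i(L)$ at each step) extends this to $N_i(L/A)=N_{i+1}(L)/A$ and $\phi_i(L/A)=\phi_{i+1}(L)/A$ for $i=1,\ldots,n(L)-1$, whence $n(L/A)=n(L)-1$ and $N_i(L/A)/\phi_i(L/A)\cong N_{i+1}(L)/\phi_{i+1}(L)$ is a chief factor of $L$; as $A\subseteq N(L)\subseteq\phi_{i+1}(L)$ it is a chief factor of $L/A$, so $L/A$ is extreme. The one genuinely delicate point is the complemented case — in particular excluding $N(L/A)=\phi(L/A)$ — and this is exactly where Lemma~\ref{l:chief} (for the Frattini terms) together with the elementary observation that a non-zero solvable Lie algebra has non-zero nilradical (for the nilradical term) carry the argument.
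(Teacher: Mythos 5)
Your proposal is correct and takes essentially the same route as the paper: the same two-case split according to whether the ideal lies in $\phi(L)$, handled via Lemma \ref{l:nat} in the first case and Lemma \ref{l:chief} in the second. You additionally make explicit the reduction to a minimal ideal $A$, the identification $N(L/A)=N_2(L)/A$ (ruling out $N(L/A)=\phi(L/A)$), and the induction up the nilpotent series — details the paper compresses into ``the result again follows.''
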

\begin{proof} Let $B$ be an ideal of the extreme Lie algebra $L$. Suppose first that $B \subseteq \phi(L)$. Then  $N_i(L/B)/\phi_i(L/B)$ is a chief factor of $L/B$ for each $i$, by Lemma \ref{l:nat}, and $L/B$ is extreme. So suppose that $B \not \subseteq \phi(L)$. Then $\phi(L/B)=\phi_2(L)/B$, by Lemma \ref{l:chief}, and the result again follows.
\end{proof}
\bigskip

We say that the chief factor $A/B$ is {\em complemented} if there is a maximal subalgebra $M$ of $L$ such that $L=A+M$ and $A\cap M=B$. We define $c(L)$ to be the number of complemented chief factors in a chief series for $L$. This is independent of the particular chief series chosen, by \cite[Theorem 2.3]{chief}. 
\par

Let $x \in L$ and let ad\,$x$ be the corresponding inner derivation of $L$. If $F$ has characteristic zero suppose that (ad\,$x)^n = 0$ for some $n$; if $F$ has characteristic $p$ suppose that $x \in I$ where $I$ is a nilpotent ideal of $L$ of class less than $p$. Put
\[
\hbox{exp(ad\,}x) = \sum_{r=0}^{\infty} \frac{1}{r!}(\hbox{ad\,}x)^r.
\]
Then exp(ad\,$x)$ is an automorphism of $L$. We call the group ${\mathcal I}(L)$ generated by all such automorphisms the group of {\em inner automorphisms} of $L$. Two subsets $U, V$  are {\em conjugate in $L$} if  $U = \alpha(V)$ for some $\alpha \in {\mathcal I}(L)$.
\par

Then we have the following characterisation of extreme Lie algebras.

\begin{theor}\label{t:extreme}  Let $L$ be a solvable Lie algebra. Then the following statements are equivalent:
\begin{itemize}
\item[(i)] $L$ is extreme;
\item[(ii)] $n(L)=m(L)$, the number of conjugacy classes of maximal subalgebras of $L$;
\item[(iii)] $n(L)=c(L)$; and
\item[(iv)] if $B$ is an ideal of $L$, then $L/B$ has at most one complemented minimal ideal.
\end{itemize}
\end{theor}
\begin{proof} \begin{itemize} \vspace{-.2cm} 
\item[(i) $\Rightarrow$ (ii)]: Let $L$ be extreme and consider the series
\[ 0 \subset \phi_1(L) \subset N_1(L) \subset \ldots \subset \phi_i(L) \subset N_i(L) \subset \ldots.
\]
There is a unique conjugacy class of maximal subalgebras of $L$ complementing the chief factor $N_i(L)/\phi_i(L)$ for each $i=1,2, \ldots, n(L)$, by \cite{barnes}. But each maximal subalgebra of $L$ must complement one of the complemented chief factors in the above series, and must, therefore, belong to one of these $n(L)$ conjugacy classes. Hence $n(L)=m(L)$.
\item[(ii) $\Rightarrow$ (iii)]: We use induction on the dimension of $L$. Suppose that $L$ is a Lie algebra satisfying $n(L)=m(L)$ and assume that the implication holds for Lie algebras of smaller dimension than that of $L$. If $\phi(L) \neq 0$, we have $n(L/\phi(L))=n(L)=m(L)=m(L/\phi(L)$, and so, by induction, $n(L)=n(L/\phi(L))=m(L/\phi(L))=c(L/\phi(L))=c(L)$.
\par

So suppose that $\phi(L)=0$. Then $N(L)=Asoc(L)$ and each of the $r$ (say) minimal ideals in $Asoc(L)$ is complemented, by \cite[Theorem 7.4 and Lemma 7.2]{frat}. It follows that $n(L)=m(L) \geq m(L/N(L))+r \geq n(L)-1+r$. Hence $r=1$ and, by induction, $c(L)=1+c(L/N(L))=1+n(L/N(L))=n(L)$.
\item[(iii) $\Rightarrow$ (i)]: This follows from the fact that there is at least one complemented chief factor $A/B$ satisfying $\phi_i(L) \leq B < A \leq N_i(L)$ for each $i=1,2, \ldots, n(L)$.
\item[(i) $\Leftrightarrow$ (iv)]: If $L$ is extreme, then so is $L/B$, by Lemma \ref{l:factor}. Hence $L/B$ has at most one complemented minimal ideal, by Lemma \ref{l:chief}.
\par

Conversely, suppose that $L$ satisfies (iv) and consider $L/\phi_i(L)$. Since $N_i(L)/\phi_i(L)$ is the direct sum of complemented minimal ideals of $L/\phi_i(L)$, as above, it follows that $N_i(L)/\phi_i(L)$  is a chief factor of $L/\phi_i(L)$. Hence $L$ is extreme.
\end{itemize}
\end{proof}
\bigskip

\begin{lemma}\label{l:extreme} Let $L$ be an extreme Lie algebra.
\begin{itemize}
\item[(i)] If $L$ is nilpotent, then $\dim L=1$.
\item[(ii)] If $L \in {\mathcal N}(n)$ then $N_{n-1}(L)=\Gamma_1(L)=L^k$ has codimension one in $L$, for all $k\geq 2$.
\end{itemize}
\end{lemma}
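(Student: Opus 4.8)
For part (i), suppose $L$ is nilpotent and extreme. Then $n(L)=1$, so $N_1(L)=L$, and the only term in the upper nilpotent series above $\phi_1(L)=\phi(L)$ is $N_1(L)/\phi_1(L)=L/\phi(L)$, which must be a chief factor of $L$. But $L$ is nilpotent, so $L/\phi(L)$ is abelian and every subspace of it is an ideal of $L/\phi(L)$; being a chief factor (minimal ideal) of $L$ then forces $\dim(L/\phi(L))=1$. Since $\phi(L)=L^2$ for nilpotent $L$ (a standard fact), $\dim L/L^2=1$ implies $L$ is generated by a single element, hence $L=Fx$ is one-dimensional. So $\dim L=1$.

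For part (ii), let $L\in\mathcal N(n)$. First I would observe that $L/N_{n-1}(L)$ is nilpotent of nilpotent length $1$, and since $L$ is extreme, $N_n(L)/\phi_n(L)=L/\phi_n(L)$ is a chief factor; as above this forces $\dim\bigl(L/\phi_n(L)\bigr)=1$, and since $N_{n-1}(L)\subseteq\phi_n(L)\subseteq L$ with $L/\phi_n(L)$ one-dimensional, the plan is to show $\phi_n(L)=N_{n-1}(L)$, giving $N_{n-1}(L)$ codimension one. For this I would use that $L/N_{n-1}(L)$ is nilpotent with $\phi(L/N_{n-1}(L))=\phi_n(L)/N_{n-1}(L)$ equal to its derived algebra; a one-dimensional quotient modulo the Frattini subalgebra means the nilpotent algebra $L/N_{n-1}(L)$ is itself one-dimensional (same argument as in (i)), so $\phi_n(L)=N_{n-1}(L)$ and $L/N_{n-1}(L)$ is abelian one-dimensional. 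Then $L^2\subseteq N_{n-1}(L)$; conversely $N_{n-1}(L)=\Gamma_1(L)$ by Lemma~\ref{l:length} (since the nilpotent residual is the smallest ideal with nilpotent quotient, and $L/N_{n-1}(L)$ being nilpotent gives $\Gamma_1(L)\subseteq N_{n-1}(L)$, while codimension-one abelian quotient is the largest nilpotent quotient possible here unless $N_{n-1}(L)$ were smaller, which the extreme hypothesis excludes). Finally, since $L/N_{n-1}(L)$ is $1$-dimensional abelian, writing $L=Fx\dot{+}N_{n-1}(L)$ one checks $L^2=[x,N_{n-1}(L)]+N_{n-1}(L)^2$; because $N_{n-1}(L)$ is an ideal and $L^2$ has codimension one (as $L^2\subseteq N_{n-1}(L)$ and $L/L^2$ must be nilpotent of length one containing the length-one quotient $L/N_{n-1}(L)$), a short induction using $L^{k+1}=[L^k,L]$ together with $N_{n-1}(L)=L^2$ gives $L^k=L^2=N_{n-1}(L)$ for all $k\geq 2$.

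The main obstacle I anticipate is pinning down the identity $N_{n-1}(L)=L^2$ cleanly: a priori one only knows $L^2\subseteq N_{n-1}(L)$ and that $L/N_{n-1}(L)$ is nilpotent, so $\Gamma_1(L)\subseteq N_{n-1}(L)$; the reverse containment $N_{n-1}(L)\subseteq L^2$ is exactly where extremeness must be invoked, via the fact that $L/\phi_n(L)=N_n(L)/\phi_n(L)$ is a chief factor and hence one-dimensional, forcing the nilpotent algebra $L/N_{n-1}(L)$ to coincide with $L/L^2$ and to be one-dimensional. Once that is established, the stabilization $L^k=L^2$ for $k\geq 2$ is the routine observation that $L$ acts on the ideal $N_{n-1}(L)=L^2$ through $\mathrm{ad}\,x$ for a single $x$ spanning a complement, and $[x,L^2]+[L^2,L^2]=L^2$ because $L^2\subseteq N_{n-1}(L)$ already has codimension one, so no further terms can drop.
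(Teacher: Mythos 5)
Part (i) of your proposal is correct and is essentially the paper's own argument: extremeness makes $L/\phi(L)=L/L^2$ a chief factor, which must be one-dimensional because $L$ acts on it trivially, and the non-generator property of $\phi(L)$ then forces $\dim L=1$. Your opening moves in (ii) are also sound: $L/\phi_n(L)$ is a one-dimensional chief factor, and since $\phi_n(L)/N_{n-1}(L)$ is the derived algebra of the nilpotent algebra $L/N_{n-1}(L)$, the argument of (i) applied to that quotient gives $\phi_n(L)=N_{n-1}(L)$ of codimension one.

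The rest of (ii) has a genuine gap, concentrated in the three claims $\Gamma_1(L)=N_{n-1}(L)$, $L^2=N_{n-1}(L)$ and $L^k=L^2$. For the second you assert that $L^2$ has codimension one ``as $L^2\subseteq N_{n-1}(L)$ and $L/L^2$ must be nilpotent of length one containing the length-one quotient $L/N_{n-1}(L)$''; this is a non sequitur, since an abelian quotient of $L$ mapping onto a one-dimensional quotient can perfectly well have dimension $\geq 2$ --- nothing you have established rules out a nonzero central factor $N_{n-1}(L)/L^2$. Likewise ``which the extreme hypothesis excludes'' (for $\Gamma_1(L)=N_{n-1}(L)$) and ``no further terms can drop'' (for $L^k=L^2$) are assertions, not arguments. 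All three facts do follow from extremeness, but extremeness of the \emph{quotients} $L/L^2$, $L/\Gamma_1(L)$ and $L/L^k$, which you never invoke: Lemma \ref{l:factor} states that every factor algebra of an extreme algebra is extreme, so each of these nilpotent quotients is extreme and hence one-dimensional by part (i). Combined with the containments $\Gamma_1(L)\subseteq L^k\subseteq L^2\subseteq N_{n-1}(L)\subsetneq L$, all four ideals have codimension one and are therefore equal --- which is the entirety of the paper's proof of (ii). Without Lemma \ref{l:factor} (or an equivalent substitute, such as the bound on conjugacy classes of maximal subalgebras in Theorem \ref{t:extreme}), the codimension-one statements for $L^2$, $L^k$ and $\Gamma_1(L)$ remain unproved.
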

\begin{proof}\begin{itemize}
\item[(i)] Since $L$ is nilpotent, $\phi(L)=L^2$, and so $\dim L/L^2=1$. The result follows.
\item[(ii)] This follows from Lemma \ref{l:factor} and (i).
\end{itemize}
\end{proof}

\begin{theor}\label{t:extremedecomp}
The Lie algebra $L \in {\mathcal N}(n)$ is extreme if and only if $L$ has the decomposition given in Theorem \ref{t:decomp}, $\dim B_n=1$ and $N(U_k)/\phi(U_k)$ is a chief factor of $U_k$ for each $k=0, \ldots,n-1$.
\end{theor}
\begin{proof} We have that
\begin{align}
\frac{N_{k+1}(L)}{\phi_{k+1}(L)} & =\frac{N_{k+1}(L)\cap U_k+N_k(L)}{\phi_{k+1}(L)\cap U_k+N_k(L)}\cong \frac{(N_{k+1}(L)\cap U_k+N_k(L))/N_k(L)}{(\phi_{k+1}(L)\cap U_k+N_k(L))/N_k(L)} \nonumber \\
 & \cong \frac{N_{k+1}(L)\cap U_k/N_k(L)\cap U_k}{\phi_{k+1}(L)\cap U_k/N_k(L)\cap U_k} \cong \frac{N_{k+1}(L)\cap U_k}{\phi_{k+1}(L)\cap U_k}= \frac{N(U_k)}{\phi(U_k)}. \nonumber
\end{align}
Also $\dim B_n=1$ by Lemma \ref{l:extreme} (ii).
\end{proof} 
\bigskip

If $S$ is a subalgebra of $L$, we will denote by $\overline{S}$ the image of $S$ under the canonical homomorphism from $L$ onto $L/\phi(L)$. We have the following characterisation of those Lie algebras $L \in {\mathcal N({\leq 2})}$ that are extreme, which includes all extreme Lie algebras over a field of characteristic zero.

\begin{coro}\label{c:extreme2} Let $L \in {\mathcal N({\leq 2})}$. Then $L$ is extreme if and only if one of the following holds.
\begin{itemize}
\item[(i)] $\dim L=1$; or
\item[(ii)] $\overline{L}=\overline{A} \dot{+} \overline{U}$ where $\overline{A}=\overline{N(L)}$ is the monolith of $\overline{L}$ and $\overline{U}$  is a one-dimensional subalgebra of $\overline{L}$ which acts irreducibly on $\overline{A}$.
\end{itemize}
\end{coro}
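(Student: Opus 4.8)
The plan is to reduce everything to the $\phi$-free quotient $\overline{L}=L/\phi(L)$. First, $L$ is extreme if and only if $\overline{L}$ is: the forward implication is Lemma \ref{l:factor}, and for the converse one applies Lemma \ref{l:nat} with $B=\phi(L)$ to get $N_i(\overline{L})=N_i(L)/\phi(L)$ and $\phi_i(\overline{L})=\phi_i(L)/\phi(L)$ for $i=1,\dots,n(L)$; hence $n(\overline{L})=n(L)$ and $N_i(L)/\phi_i(L)\cong N_i(\overline{L})/\phi_i(\overline{L})$ as factors of the relevant quotient algebras, so one is a chief factor exactly when the other is. Recall also that $\overline{L}$ is $\phi$-free and $\overline{N(L)}=N(\overline{L})$, by \cite[Theorem 6.1]{frat}. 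Thus it suffices to decide when a $\phi$-free $\overline{L}\in{\mathcal N}(\leq 2)$ is extreme, and then translate back to $L$.

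For the ``only if'' direction, suppose $L$ is extreme. If $L$ is nilpotent (equivalently $n(L)\leq 1$, since $L\in{\mathcal N}(\leq 2)$), then Lemma \ref{l:extreme}(i) gives $\dim L=1$, which is case (i). So assume $L$ is not nilpotent; then $n(L)=n(\overline{L})=2$. Put $\overline{A}=\overline{N(L)}=N(\overline{L})$. Since $\overline{L}$ is $\phi$-free, $\phi_1(\overline{L})=\phi(\overline{L})=0$, so the chief factor $N_1(\overline{L})/\phi_1(\overline{L})$ is just $\overline{A}$; hence $\overline{A}$ is a minimal ideal of $\overline{L}$, and being the nilradical it contains every minimal ideal of $\overline{L}$ (each such is abelian, hence nilpotent), so it is the unique one, i.e.\ the monolith. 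By Lemma \ref{l:extreme}(ii), $\overline{A}=N_{n(\overline{L})-1}(\overline{L})$ has codimension one in $\overline{L}$, so we may pick a one-dimensional (hence abelian) subalgebra $\overline{U}$ with $\overline{L}=\overline{A}\dot{+}\overline{U}$. Because $\overline{A}$ is abelian, a subspace $V\subseteq\overline{A}$ is an ideal of $\overline{L}$ if and only if $[V,\overline{U}]\subseteq V$; hence minimality of $\overline{A}$ is precisely the statement that $\overline{U}$ acts irreducibly on $\overline{A}$, which is case (ii).

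For the ``if'' direction, $\dim L=1$ immediately makes $L$ extreme (then $\phi(L)=0$ and $L=N_1(L)$ is a minimal ideal), so assume (ii). Then $\overline{L}/\overline{A}\cong\overline{U}$ is one-dimensional abelian, so $N(\overline{L}/\overline{A})=\overline{L}/\overline{A}$, whence $N_2(\overline{L})=\overline{L}$ and $n(\overline{L})=2$; also $\phi(\overline{L}/\overline{A})=0$, so $\phi_2(\overline{L})=N(\overline{L})=\overline{A}$. Now $N_1(\overline{L})/\phi_1(\overline{L})=\overline{A}$ is a minimal ideal of $\overline{L}$ (the monolith), hence a chief factor, and $N_2(\overline{L})/\phi_2(\overline{L})=\overline{L}/\overline{A}$ is one-dimensional, hence also a chief factor; so $\overline{L}$ is extreme, and therefore so is $L$.

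I do not expect a genuinely hard step. The point needing care is the opening reduction: one must combine Lemma \ref{l:factor} with the Frattini- and nilpotent-series identifications of Lemma \ref{l:nat} to be sure that extremeness is neither created nor destroyed in passing to $L/\phi(L)$. Once that is in place, the key simplification is that in the $\phi$-free quotient the relevant factors $N_i(\overline{L})/\phi_i(\overline{L})$ are all abelian (being either the minimal ideal $\overline{A}$ or the quotient $\overline{L}/\overline{A}$ of the nilpotent $\overline{L}/N(\overline{L})$ by its square), so ``chief factor'' collapses to ``one-dimensional'' in degree two, and the whole question reduces to whether $N(\overline{L})$ is a minimal ideal of $\overline{L}$ of codimension at most one — exactly the dichotomy in the statement.
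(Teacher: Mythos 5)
Your proof is correct, but it takes a genuinely different route from the paper's. The paper dispatches this corollary in one line, as the cases $n=1,2$ of Theorem \ref{t:extremedecomp}: the decomposition $L=B_1+\ldots+B_n$ with $\dim B_n=1$ and each $N(U_k)/\phi(U_k)$ a chief factor of $U_k$ specialises, for $n=2$, to exactly the configuration in (ii). You instead argue directly from the definition of extreme: you first establish (via Lemma \ref{l:factor} in one direction and Lemma \ref{l:nat} with $B=\phi(L)$ in the other) that $L$ is extreme if and only if $\overline{L}$ is, and then analyse the $\phi$-free algebra $\overline{L}$ by hand, using Lemma \ref{l:extreme} to get $\dim L=1$ in the nilpotent case and the codimension-one decomposition $\overline{L}=\overline{A}\dot{+}\overline{U}$ when $n(L)=2$, with minimality of $\overline{A}$ translating into irreducibility of the one-dimensional $\overline{U}$-action precisely because $\overline{A}$ is abelian. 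Your approach buys self-containedness: it bypasses the machinery of Lemma \ref{l:decomp} and Theorems \ref{t:decomp} and \ref{t:extremedecomp} entirely, and it makes explicit the two-way preservation of extremeness under passage to $L/\phi(L)$, which the paper leaves implicit in the statement of the corollary (which is phrased in terms of $\overline{L}$ while the theorem it invokes is phrased in terms of $L$). The paper's approach buys brevity, since the decomposition theorem has already been proved. Both arguments are sound; the only (shared, negligible) gap is the degenerate case $L=0$.
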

\begin{proof} This is just the cases $n=1,2$ in Theorem \ref{t:extremedecomp}.
\end{proof}

\begin{coro}\label{c:supsolv} Let $L$ be supersolvable. Then $L$ is extreme if and only if one of the following holds.
\begin{itemize}
\item[(i)] $\dim L=1$; or
\item[(ii)] $L/\phi(L)$ is the two-dimensional non-abelian Lie algebra.
\end{itemize}
\end{coro}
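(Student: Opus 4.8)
The plan is to prove both implications by reducing to the $\phi$-free algebra $\overline{L}=L/\phi(L)$, which by Lemma~\ref{l:factor} is again extreme and (being a quotient of a supersolvable algebra) supersolvable.

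\emph{The reverse implication.} For (i), the one-dimensional abelian algebra is supersolvable and, being nilpotent with $N_1=L$, extreme (cf. Lemma~\ref{l:extreme}(i)). For (ii) I would first check directly that the two-dimensional non-abelian Lie algebra $Fx\dot{+}Fy$ with $[x,y]=y$ is extreme: it is $\phi$-free, $Fy=N(L)$ is its monolith (so $N_1/\phi_1$ is a chief factor), and $\phi_2(L)=N_1(L)=Fy$, so $N_2/\phi_2$ is the one-dimensional quotient, again a chief factor. Then, given $L$ supersolvable with $L/\phi(L)$ this algebra, I would invoke the general fact — immediate from Lemma~\ref{l:nat} — that extremeness passes from $L/\phi(L)$ up to $L$: Lemma~\ref{l:nat} gives $N_i(L)/\phi_i(L)\cong N_i(L/\phi(L))/\phi_i(L/\phi(L))$, a chief factor of $L/\phi(L)$ and hence of $L$, for $i=1,\ldots,n(L)=n(L/\phi(L))$.

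\emph{The forward implication.} Suppose $L$ is supersolvable and extreme, and put $\overline{L}=L/\phi(L)$, which is $\phi$-free, extreme, supersolvable, with $n(\overline{L})=n(L)$. Since $L$ is extreme and supersolvable, $N(\overline{L})=N_1(L)/\phi_1(L)$ is a one-dimensional chief factor of $L$. If $\overline{L}$ is nilpotent then, being $\phi$-free, it is abelian, so $\overline{L}=N(\overline{L})$ is one-dimensional; then $L$ is nilpotent by Proposition~\ref{p:satfor} ($k=1$), hence $\dim L=1$ by Lemma~\ref{l:extreme}(i), which is case (i). If $\overline{L}$ is not nilpotent, then since $\overline{L}$ is $\phi$-free we have $N(\overline{L})=\mathrm{Asoc}(\overline{L})$ by \cite[Theorem 7.4]{frat}; by extremeness this is a single minimal ideal $A$, so $\overline{L}$ is monolithic with monolith $A$, $\dim A=1$, and $A$ is complemented, say $\overline{L}=A\dot{+}M$ with $M$ maximal, by \cite[Theorem 7.4 and Lemma 7.2]{frat}. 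As $A$ is the unique minimal ideal and $A\not\subseteq M$, $M$ is core-free, so $\overline{L}$ is primitive of type $1$ and $C_{\overline{L}}(A)=A$ by \cite[Theorem 1.1]{chief}. The kernel of the action of $M$ on $A$ is $M\cap C_{\overline{L}}(A)=M\cap A=0$, so $M$ acts faithfully on the one-dimensional space $A$, forcing $\dim M\leq 1$; since $\overline{L}$ is not nilpotent, $\dim M=1$ and the action is nonzero, so $\overline{L}$ is the two-dimensional non-abelian Lie algebra — case (ii).

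I expect the only real content to sit in the last paragraph: recognising the $\phi$-free, extreme, supersolvable, non-nilpotent algebra as primitive of type $1$ with a self-centralising one-dimensional monolith, and then extracting $\dim M\leq 1$ from faithfulness of the action on a line. Everything else is bookkeeping — transporting the nilpotent and Frattini series through $\phi(L)$ with Lemma~\ref{l:nat}, together with Lemmas~\ref{l:factor} and~\ref{l:extreme} and Proposition~\ref{p:satfor}.
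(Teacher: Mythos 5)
Your proof is correct, and it reaches the conclusion by a route that differs from the paper's in its final step. The paper's forward direction first deduces $n(L)\leq 2$ from $\dim\overline{N(L)}=1$ together with $C_{\overline{L}}(\overline{N(L)})\subseteq\overline{N(L)}$ (via \cite[Theorem 7.4]{frat}), and then simply quotes the decomposition of extreme algebras in ${\mathcal N}(\leq 2)$ (Theorem \ref{t:extremedecomp}, i.e.\ Corollary \ref{c:extreme2}) to read off that $\overline{L}$ is either one-dimensional or $\overline{A}\dot{+}\overline{U}$ with both summands of dimension one. You instead bypass Theorem \ref{t:extremedecomp} entirely: you split into the nilpotent case (handled by saturation of ${\mathcal N}(\leq 1)$ and Lemma \ref{l:extreme}(i)) and the non-nilpotent case, where you identify $\overline{L}$ as primitive of type $1$ with self-centralising one-dimensional monolith $A$ via \cite[Theorem 1.1]{chief}, and extract $\dim M\leq 1$ from faithfulness of the action of the complement on a line. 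The underlying engine is the same in both proofs — a self-centralising, one-dimensional socle in the $\phi$-free quotient forces codimension one — but your version is more self-contained, at the cost of redoing in miniature what Theorem \ref{t:extremedecomp} already packages. Your converse is also more explicit than the paper's (which disposes of it with ``clearly''): the direct verification that the two-dimensional non-abelian algebra is extreme, plus the observation that Lemma \ref{l:nat} transports chief factors $N_i/\phi_i$ between $L$ and $L/\phi(L)$, is exactly the justification the paper leaves unsaid. One small remark: the paper's converse additionally notes that $\dim(L/\phi(L))\leq 2$ forces $L$ to be supersolvable by Barnes's theorem, which you do not need since supersolvability is already hypothesised, so nothing is lost.
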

\begin{proof}  Let $L$ be supersolvable and extreme. Then $\dim \overline{N(L)}=1$ and so $\dim \overline{L}/C_{\overline{L}}(\overline{N(L)})=1$. But $\overline{N(L)}=N(\overline{L})$, so $C_{\overline{L}}(\overline{N(L)}) \subseteq \overline{N(L)}$. It follows that $\dim (L/N(L)) \leq 1$ and $L \in {\mathcal N({\leq 2})}$. But now either $\dim L=1$ or $\overline{L}=\overline{A} \dot{+} \overline{U}$ where $\dim \overline{A}= \dim \overline{U}=1$, by Theorem \ref{t:extremedecomp}. In the latter case $\dim \overline{L}=2$ and $\overline{L}$ cannot be abelian.
\par

Conversely, if $\dim (L/\phi(L)) \leq 2$ then $L/\phi(L)$ is supersolvable, and so $L$ is supersolvable, by \cite[Theorem 6]{barnes}. Clearly $L$ is also extreme in each of cases (i) and (ii). 
\end{proof}

\begin{ex}\label{e:supsolv} It is easy to check that every three-dimensional Lie algebra as described in Corollary \ref{c:supsolv} has a basis $x, y, z$ with non-zero products $[x,y]=y+z$, $[x,z]=\alpha z$ for some $0 \neq \alpha \in F$. Moreover, no two of these with different values of $\alpha$ are isomorphic. 
\end{ex}

\section{Minimal non-${\mathcal N}$ algebras}
If $\mathcal X$ is a class of Lie algebras, we say that $L$ is {\em minimal non-$\mathcal X$} if every proper subalgebra of $L$, but not $L$ itself, belongs to $\mathcal X$. We say that  $L$ is {\em minimal non-$\mathcal N$} if it is minimal non-$\mathcal N(\leq k)$ for some $k$; in other words, if its nilpotent length is greater than that of any of its proper subalgebras. Over a field of characteristic zero a Lie algebra can only be minimal non-${\mathcal N(1)}$ and these are described in \cite{nilp}. 

\begin{lemma}\label{l:cod} Let $L$ be minimal non-${\mathcal N(\leq k-1)}$ and let $M$ be a maximal subalgebra of $L$. Then 
\begin{itemize}
\item[(i)] $L^2=N_{k-1}(L)$ has codimension one in $L$; 
\item[(ii)] if $N_i(M) \not \subseteq N_{k-1}(L)$ then $N_{k-1}(L) \cap N_i(M)$ has codimension one in $N_i(M)$ for $i=1, \ldots, k-1$; and
\item[(iii)] if $N(L) \subseteq M$ then $N(M) \subseteq N_{k-1}(L)$.
\end {itemize}
\end{lemma}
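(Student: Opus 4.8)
The plan is to establish (i) first and deduce (ii) and (iii) from it. Note $k-1\geq 1$, so $L$ is not nilpotent; also $L\neq 0$ and $L\notin\mathcal N(\leq k-1)$ gives $n(L)\geq k$, hence $N_{k-1}(L)\neq L$. I would first show $n(L)=k$. If $N_k(L)\neq L$, pick a maximal subalgebra $M$ with $N_k(L)\subseteq M$; then $M$ is proper, so $n(M)\leq k-1$ and $N_{k-1}(M)=M$, while $N_{k-1}(L)\subseteq M$ gives $N_{k-1}(M)_L=N_{k-1}(L)$ by Lemma \ref{l:max}(iii), whence $M_L=N_{k-1}(L)$. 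But $N_k(L)$ is an ideal of $L$ inside $M$, so $N_k(L)\subseteq M_L=N_{k-1}(L)$, forcing $N_k(L)=N_{k-1}(L)$ and so $N(L/N_{k-1}(L))=0$ — impossible for the nonzero solvable algebra $L/N_{k-1}(L)$. Thus $N_k(L)=L$ and $n(L)=k$.

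The key auxiliary fact I would use is that $\mathcal N(\leq m)$ is closed under sums of ideals: if $I,J$ are ideals of a Lie algebra $G$ with $G=I+J$ and $n(I),n(J)\leq m$, then $n(G)\leq m$. Here I work with the lower nilpotent series. For an ideal $I\trianglelefteq G$ each term $I^n$ of the lower central series of $I$ is invariant under every derivation of $I$, in particular under $\mathrm{ad}_G x|_I$, so $\gamma_\infty(I)\trianglelefteq G$, and inductively $\Gamma_i(I)\trianglelefteq G$. Now $G/(\gamma_\infty(I)+\gamma_\infty(J))$ is the sum of the images of $I$ and $J$, which are nilpotent ideals, hence it is nilpotent; therefore $\gamma_\infty(G)\subseteq\gamma_\infty(I)+\gamma_\infty(J)$. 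Applying this repeatedly to the pairs of ideals $\Gamma_i(I),\Gamma_i(J)$ gives $\Gamma_i(G)\subseteq\Gamma_i(I)+\Gamma_i(J)$ for all $i$; taking $i=m$ and using Lemma \ref{l:length} (so $\Gamma_m(I)=\Gamma_m(J)=0$) yields $\Gamma_m(G)=0$, i.e. $n(G)\leq m$.

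To finish (i): every codim-$1$ ideal of $L$ contains $L^2$, and conversely each hyperplane of the abelian algebra $L/L^2$ lifts to a codim-$1$ (hence maximal, hence proper) ideal of $L$, which therefore lies in $\mathcal N(\leq k-1)$. If $\dim L/L^2\geq 2$ there are distinct such ideals $W_1\neq W_2$, and being codim $1$ they satisfy $W_1+W_2=L$; the auxiliary fact then gives $L\in\mathcal N(\leq k-1)$, contradicting $n(L)=k$. Hence $\dim L/L^2=1$. Finally $N_{k-1}(L)\neq L$ and $L/N_{k-1}(L)$ is nilpotent (of nilpotent length $n(L)-(k-1)=1$); if $L^2\not\subseteq N_{k-1}(L)$ it would be a nonabelian nilpotent Lie algebra, forcing its abelianisation $L/(L^2+N_{k-1}(L))$ — a quotient of the $1$-dimensional $L/L^2$ — to have dimension $\geq 2$, which is absurd. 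So $L^2\subseteq N_{k-1}(L)\subsetneq L$ with $L^2$ of codim $1$, i.e. $L^2=N_{k-1}(L)$ of codimension one.

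Parts (ii) and (iii) are then quick. For (ii): by (i), $L/N_{k-1}(L)$ is $1$-dimensional, so if $N_i(M)\not\subseteq N_{k-1}(L)$ its image there is all of $L/N_{k-1}(L)$, giving $N_i(M)+N_{k-1}(L)=L$ and $N_i(M)/(N_i(M)\cap N_{k-1}(L))\cong L/N_{k-1}(L)$ of dimension one. For (iii): suppose $N(M)\not\subseteq N_{k-1}(L)=L^2$. Since $N(L)\subseteq M$, Lemma \ref{l:max}(v) makes $N(M)$ act nilpotently on $L$, so $\mathrm{ad}_L y$ is nilpotent for each $y\in N(M)$; by (i), $N(M)+L^2=L$, so choosing $y\in N(M)\setminus L^2$ gives $L=Fy\dot{+}L^2$. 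Then $L^2=[L,L]=(\mathrm{ad}_L y)(L^2)+[L^2,L^2]$, and since $(\mathrm{ad}_L y)(L^2)\subseteq L^2$ and $(\mathrm{ad}_L y)([L^2,L^2])\subseteq[L^2,L^2]$ (Jacobi), induction yields $L^2=(\mathrm{ad}_L y)^j(L^2)+[L^2,L^2]$ for all $j$; taking $j$ with $(\mathrm{ad}_L y)^j=0$ gives $L^2=[L^2,L^2]$, so the solvable algebra $L^2$ is perfect, hence zero, making $L$ abelian — contradicting $n(L)=k\geq 2$. Thus $N(M)\subseteq N_{k-1}(L)$. I expect the main work to be in getting the auxiliary closure lemma and the computation $n(L)=k$ exactly right (in particular checking the ideal-invariance needed for the $\Gamma_i$ manipulation over an arbitrary field); after that, (i)–(iii) reduce to codimension counting and the "perfect solvable is zero" observation.
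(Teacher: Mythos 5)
Your proof is correct, but it diverges from the paper's in two places. For part (i) the paper argues directly with cores: by Lemma \ref{l:max}(i)--(iii), any maximal subalgebra $M\supseteq L^2$ is an ideal of codimension one satisfying $M\cap N_{k-1}(L)=N_{k-1}(M)_L=M$, so every such $M$ equals $N_{k-1}(L)$ and the conclusion follows at once. You instead prove that ${\mathcal N}(\leq m)$ is closed under sums of ideals (a Fitting-class property, obtained via $\Gamma_i(G)\subseteq\Gamma_i(I)+\Gamma_i(J)$ and Lemma \ref{l:length}) and rule out $\dim L/L^2\geq 2$ by summing two hyperplanes; this is a genuinely new closure statement, complementary to the formation property of Proposition \ref{p:satfor}, and it also lets you record $n(L)=k$ explicitly, a point the paper leaves implicit when it asserts that $L/N_{k-1}(L)$ is nilpotent. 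For part (iii) the paper concludes from $L=N_{k-1}(L)+N(M)$ that $L/N_{k-2}(L)$ is nilpotent (sum of a nilpotent ideal and a nilpotently acting nilpotent subalgebra), whereas you choose $y\in N(M)\setminus L^2$ and show $L^2=[L^2,L^2]$, so that solvability forces $L^2=0$; your version is more self-contained since it avoids the unproved ``nilpotent ideal plus nilpotently acting subalgebra'' fact. Part (ii) is identical in both treatments. The trade-off is roughly this: your route is longer and requires the auxiliary closure lemma to be set up carefully (which you do correctly --- the key points being that the terms $\Gamma_i(I)$ of an ideal $I$ are themselves ideals of $G$, and that $\gamma_\infty$ is monotone under inclusion of subalgebras), while the paper's is shorter but leans harder on the core computations of Lemma \ref{l:max}.
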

\begin{proof} \begin{itemize}
\item[(i)] Let $M$ be a maximal subalgebra of $L$ containing $N_{k-1}(L)$.  Since $L/N_{k-1}(L)$ is nilpotent, $M$ is an ideal of $L$ and has codimension one in $L$. But $N_{k-1}(M)_L=N_{k-1}(L)$, by Lemma \ref{l:max} (ii), so, if $N_{k-1}(L) \neq M$, then $M$ has nilpotent length $k$, contradicting the fact that it is minimal non-${\mathcal N(k)}$. Hence $M=N_{k-1}(L)$. 
\par

Now let $M$ be any maximal subalgebra containing $L^2$, so $M$ is an ideal of codimension one in $L$. We have $M \cap N_i(L)=N_i(M)_L$ for each $i=1, \ldots k-1$ by Lemma \ref{l:max} (i) and (ii). It follows that $M \cap N_{k-1}(L) = N_{k-1}(M)_L=M$, so $M \subseteq N_{k-1}(L)$, whence $M=N_{k-1}(L)=L^2$.
\item[(ii)]Suppose $N_i(M) \not \subseteq N_{k-1}(L)$. Then $L=N_{k-1}(L)+N_i(M)$, so
\[ \frac{L}{N_{k-1}(L)} \cong \frac{N_i(M)}{N_{k-1}(L) \cap N_i(M)},
\]
whence the result. 
\item[(iii)] Let $N(L) \subseteq M$. Then $N(M)$ acts nilpotently on $L$, by Lemma \ref{l:max} (v). If $N(M) \not \subseteq N_{k-1}(L)$ then $L=N_{k-1}(L)+N(M)$ and $L/N_{k-2}(L)$ is nilpotent, a contradiction.
\end{itemize}
\end{proof}

In group theory, every minimal non-${\mathcal N(k)}$ group is extreme, and so a natural question is whether this holds for Lie algebras. We show next that this is `usually' the case for Lie algebras. We call a class ${\mathcal H}$ of Lie algebras a {\em semi-homomorph} if, for all $L \in {\mathcal H}$,
\begin{itemize}
\item[(i)] $L/N(L) \in {\mathcal H}$; and
\item[(ii)] if $A$ is an ideal of $L$ and $A\subseteq \phi(L)$, then $L/A \in {\mathcal H}$.
\end{itemize}

\begin{lemma}\label{l:crit} Let ${\mathcal H}$ be the class of Lie algebras $L$ in which all maximal subalgebras of $L$ have strongly nilregular cores. Then ${\mathcal H}$ is a semi-homomorph.
\end{lemma}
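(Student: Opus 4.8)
The plan is to verify the two defining properties of a semi-homomorph. In each case the strategy is the same: describe the maximal subalgebras of the relevant quotient and their cores, read off the associated compatibility indices, and use Proposition \ref{p:nmax} to transport the nilregularity information from $L$ down to the quotient. Fix $L\in{\mathcal H}$.

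For property (i): the maximal subalgebras of $L/N(L)$ are exactly the $M/N(L)$ with $M$ a maximal subalgebra of $L$ containing $N(L)$; such an $M$ has compatibility index $r\geq 1$, its core in $L/N(L)$ is $M_L/N(L)$ (as $N(L)\subseteq M_L$), and, since $N_j(L/N(L))=N_{j+1}(L)/N(L)$, this core has compatibility index $r-1$ in $L/N(L)$. Because $L\in{\mathcal H}$, $M_L$ is strongly nilregular, so Proposition \ref{p:nmax} gives $N_i(M_L)=N_i(L)$ for $i=1,\ldots,r$; in particular $N(M_L)=N(L)$, whence $M_L/N(L)=M_L/N_1(M_L)$ and $N_k(M_L/N(L))=N_{k+1}(M_L)/N_1(M_L)$. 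Therefore
\[
\frac{N_k(M_L/N(L))}{N_{k-1}(M_L/N(L))}\cong\frac{N_{k+1}(M_L)}{N_k(M_L)}
\]
has nilpotency class less than $p-1$ for $k=1,\ldots,r-1$, these being precisely the quotients controlled by strong nilregularity of $M_L$. Thus every maximal subalgebra of $L/N(L)$ has a strongly nilregular core, i.e. $L/N(L)\in{\mathcal H}$.

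For property (ii), let $A$ be an ideal of $L$ with $A\subseteq\phi(L)$. Then every maximal subalgebra of $L$ contains $A$, so the maximal subalgebras of $L/A$ are the $M/A$ with $M$ maximal in $L$; by Lemma \ref{l:nat} the compatibility index of $M/A$ in $L/A$ equals the compatibility index $r$ of $M$ in $L$, and its core is $M_L/A$, again of compatibility index $r$. If $r=0$ there is nothing to prove, so assume $r\geq 1$. Proposition \ref{p:nmax} gives $N_i(M_L)=N_i(L)$ for $i\leq r$, so each $N_i(M_L)$ is an ideal of $L$, and, $\phi(L)$ being a nilpotent ideal of $L$, one has $A\subseteq\phi(L)\subseteq N(L)=N(M_L)=N_1(M_L)$. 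It then remains to establish
\[
N_k(M_L/A)=N_k(M_L)/A\qquad(k=1,\ldots,r),
\]
for then $N_k(M_L/A)/N_{k-1}(M_L/A)\cong N_k(M_L)/N_{k-1}(M_L)$ has class less than $p-1$, so $M_L/A$ is strongly nilregular in $L/A$ and $L/A\in{\mathcal H}$.

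This last equality is the main obstacle. The inclusion $N_k(M_L)/A\subseteq N_k(M_L/A)$ is automatic, and since $A\subseteq N_1(M_L)$ the reverse inclusion for general $k$ follows from the case $k=1$ by an easy induction (the step from $k-1$ to $k$ being immediate from the third isomorphism theorem and the definition of the upper nilpotent series). So everything reduces to showing $N(M_L/A)=N(M_L)/A$, and here it is essential to use $A\subseteq\phi(L)$ and not merely $A\subseteq N(M_L)$, since over characteristic $p$ the nilradical need not be inherited by factor algebras. The approach I would take: writing $K/A=N(M_L/A)$, observe that $K$ is a subideal of $L$ with $K\subseteq N_2(M_L)$ and $\gamma_{\infty}(K)\subseteq A\subseteq\phi(L)$, and then argue from $A\subseteq\phi(L)$ together with the nilregularity of $M_L$ (via Proposition \ref{p:nilregular}) that $K$ must be nilpotent, hence $K\subseteq N(M_L)$. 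Making this reduction to $k=1$ rigorous is the heart of the proof; the rest is the bookkeeping above.
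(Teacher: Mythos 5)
Your overall route is the paper's: verify the two semi-homomorph properties by tracking cores and compatibility indices through the quotient, and in case (ii) reduce everything to the identity $N_k(M_L/A)=N_k(M_L)/A$. Part (i) is complete and matches the paper (your explicit use of $N(M_L)=N(L)$ via Proposition \ref{p:nmax} to justify the shift of the upper nilpotent series is, if anything, a touch more careful than the paper's ``clearly''), and your observation that the reverse inclusion for general $k$ follows from the case $k=1$ by the third isomorphism theorem is correct and actually tidier than the paper's sandwiching of the $K_i$ between terms of the series of $L$.

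However, the one step you flag as ``the heart of the proof'' --- that $K$ is nilpotent when $K/A$ is nilpotent, $K$ is a subideal of $L$ and $A\subseteq\phi(L)$ --- is precisely the step you do not carry out, and the tool you point to will not close it. Proposition \ref{p:nilregular} only says $N(I)\subseteq N(L)$ for a nilregular ideal $I$; it gives no mechanism for lifting nilpotency of $K/A$ back to $K$, and nilregularity of $M_L$ is not what is doing the work here. What is needed is the Frattini-theoretic statement that a subideal $K$ of $L$ with $K/(K\cap\phi(L))$ nilpotent is itself nilpotent, i.e.\ \cite[Theorem 6.1]{frat}, which is exactly what the paper invokes at this point (and uses again in Lemma \ref{l:decomp}). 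With that citation in place of your appeal to Proposition \ref{p:nilregular}, your argument is complete; without it, the crucial equality $N(M_L/A)=N(M_L)/A$ remains unproved, so as written there is a genuine gap.
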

\begin{proof} 
\begin{itemize} \item[(i)] Let $M/N(L)$ be a maximal subalgebra of $L/N(L)$. Clearly we have that $(M/N(L))_{L/N(L)}=M_L/N(L)$ and $M_L/N(L)$ has compatibility index one less than that of $M_L$, $r-1$ say. Then 
\[ \frac{N_i(M_L/N(L))}{N_{i-1}(M_L/N(L))}\cong \frac{N_{i+1}(M_L)}{N_i(M_L)},
\] which has nilpotency class $<p-1$ for $i=1, \ldots, r-1$, since $M_L$ is strongly nilregular. Hence $M/N(L)$ has a strongly nilregular core.
\item[(ii)] Let $M/A$ be a maximal subalgebra of $L/A$. Then $(M/A)_{L/A}=M_L/A$. Also, $N_i(L/A)=N_i(L)/A$, by Lemma \ref{l:nat}, so $M_L/A$ has the same compatibility index as $M_L$, $r$ say. Also $N_i(M)=N_i(M_L)=N_i(L)$ for $i=1, \ldots, r$, by Proposition \ref{p:nmax}.
\par

We claim that $N_i(M_L/A)=N_i(M_L)/A$ for each $i=1, \ldots, n(M_L)$.  Put $K_i/A=N_i(M_L/A)$, so $K_{i+1}/K_i=N(M_L/K_i)$. Then $K_1$ is a subideal of $L$, $A\subseteq K_1\cap \phi(L)$ and $K_1/A$ is nilpotent, so $K_1$ is nilpotent, by \cite[Theorem 6.1]{frat}. It follows that $K_1\subseteq N(M_L)$ and $N(M_L/A)=N(M_L)/A$. Hence 
\[ K_1 \subseteq N(M_L) \subseteq K_2 \subseteq \ldots \subseteq K_i \subseteq N_i(M_L) \subseteq K_{i+1} \subseteq N_{i+1}(L) \subseteq \ldots
\] and $K_{i+1}/K_i\subseteq N_{i+1}(L)/K_i=N_{i+1}(M_L)/K_i$ for $i=1, \ldots, r-1$. It follows that $N_{i+1}(M_L/A)=K_{i+1}/A\subseteq N_{i+1}(L)/A$. The reverse inclusion is clear and the claim is established.
\par

Now we have that
\[ \frac{N_i(M_L/N(L))}{N_{i-1}(M_L/N(L))}\cong \frac{N_i(M_L)}{N_{i-1}(M_L)},
\] which has nilpotency class $<p-1$ for $i=1, \ldots, r$, since $M_L$ is strongly nilregular. Hence $M/N(L)$ has a strongly nilregular core.
\end{itemize}
\end{proof}

A solvable primitive algebra has a unique minimal, self-centralising, ideal $A$ such that $L=A\dot{+} U$ (see \cite{chief}). We shall say that a class of Lie algebras ${\mathcal H}$ has the {\em primitive quotient property} if, for every primitive algebra $L$ in ${\mathcal H}$ with minimal ideal $A$, $L/A$ is minimal non-${\mathcal N}$. 

\begin{theor}\label{t:crit} Let ${\mathcal H}$ be a semi-homomorph with the primitive quotient property, and let $L \in {\mathcal H}$ be a Lie algebra which is minimal non-${\mathcal N(\leq n)}$. Then 
\begin{itemize}
\item[(i)] $L$ is extreme; and
\item[(ii)] $L/N(L)$ is minimal non-${\mathcal N(\leq n-1)}$.
\end{itemize}
\end{theor}
\begin{proof}  \begin{itemize} \item[(i)] We use induction on $\dim L$. Suppose first that $\phi(L) \neq 0$. Let $A$ be a minimal ideal of $L$ contained in $\phi(L)$. Then $n(L/A)=n(L)$, so $L/A$ is minimal non-$\mathcal N$. By the inductive hypothesis, $L/A$, and hence $L$ is extreme. So suppose that $\phi(L)=0$. If there are at least two minimal ideals then there is at least one, $A$ say, such that $n(L/A)=n(L)$, by Corollary \ref{c:satfor}. But then $L=A \dot{+} M$ for some maximal subalgebra $M$ of $L$, and $n(M)=n(L/A)=n(L)$, a contradiction. 
\par

Thus there is a unique minimal ideal $A=N(L)$, $L$ is primitive and $n(L/A)=n(L)-1$. Since ${\mathcal H}$ has the primitive quotient property, $L/A$ is minimal non-${\mathcal N}$. Moreover, since ${\mathcal H}$ is a semi-homomorph, $L/A$, and thus $L$, is extreme, by induction. 
\item[(ii)]  Consider the series
\[ 0 \subset \phi_1(L) \subset N_1(L) \subset \ldots \subset \phi_i(L) \subset N_i(L) \subset \ldots,
\]
and let $M$ be a maximal subalgebra of $L$ containing $N_1(L)$. Then $M$ must complement one of the complemented chief factors $N_k(L)/\phi_k(L)$ for some $2 \leq k \leq n$ in the above series. But then $L=N_k(L)+M$, $M \cap N_k(L)=\phi_k(L)$ and $N_i(M)=N_i(L)$ for $i=1, \ldots, k-1$, by Lemma \ref{l:max}. Thus $n(M)-k+1=n(M/N_{k-1}(L))=n(L/N_k(L))=n(L)-k$, whence $n(M/N_1(L))=n(M)-1=n(L)-2=n(L/N_1(L))-1$ and $L/N_1(L)$ is minimal non-${\mathcal N(n-1)}$.
\end{itemize}
\end{proof}

\begin{coro}\label{c:crit}  Let $L$ be a Lie algebra in which all maximal subalgebras have strongly nilregular cores and which is minimal non-${\mathcal N(\leq n)}$. Then $L$ is extreme and $L/N(L)$ is minimal non-${\mathcal N(\leq n-1)}$.
\end{coro}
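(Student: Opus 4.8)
The plan is to obtain Corollary \ref{c:crit} directly from Theorem \ref{t:crit}, applied to the class ${\mathcal H}$ consisting of all (finite-dimensional solvable) Lie algebras every maximal subalgebra of which has a strongly nilregular core. Our $L$ lies in ${\mathcal H}$ by hypothesis and is minimal non-${\mathcal N(\leq n)}$, and Lemma \ref{l:crit} shows that ${\mathcal H}$ is a semi-homomorph; so the only hypothesis of Theorem \ref{t:crit} that still has to be checked is that ${\mathcal H}$ enjoys the primitive quotient property. Granting that, Theorem \ref{t:crit}(i) gives that $L$ is extreme and Theorem \ref{t:crit}(ii) gives that $L/N(L)$ is minimal non-${\mathcal N(\leq n-1)}$, which is precisely the assertion. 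Thus everything comes down to the primitive quotient property for ${\mathcal H}$; and, since that property is only ever invoked in the proof of Theorem \ref{t:crit} for an algebra which is itself minimal non-${\mathcal N}$, it is enough to verify it for the members of ${\mathcal H}$ that are minimal non-${\mathcal N}$.

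So let $L_0 \in {\mathcal H}$ be primitive with minimal ideal $A$. As $L_0$ is solvable it is primitive of type $1$, so $A = N(L_0)$ is self-centralising and, writing $L_0 = A \dot{+} U$ with $U$ a core-free maximal subalgebra, we have $U \cong L_0/A = L_0/N(L_0)$ and hence $n(U) = n(L_0)-1$. I would show that $U$ is minimal non-${\mathcal N}$ as follows. Let $T$ be a proper subalgebra of $U$; then $A \dot{+} T$ is a proper subalgebra of $L_0$, so $n(A \dot{+} T) \le n(L_0)-1 = n(U)$. Since $A$ is an abelian, hence nilpotent, ideal of $A \dot{+} T$ we have $A \subseteq N(A \dot{+} T)$, and $(A \dot{+} T)/A \cong T$, so Lemma \ref{l:quot} gives $n(T) = n(A \dot{+} T)$ or $n(A \dot{+} T)-1$. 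The goal is therefore to exclude $n(T) = n(A \dot{+} T) = n(U)$, i.e. to show that the self-centralising abelian ideal $A$ genuinely raises the nilpotent length of $A\dot{+}T$. Here one uses that $A$ acts faithfully on $T$ (because $C_T(A) \subseteq C_{L_0}(A) = A$), together with Proposition \ref{p:nmax} and Proposition \ref{p:max} applied to the maximal subalgebras of $L_0$ lying above $A \dot{+} T$ — for $T$ maximal in $U$ the subalgebra $A \dot{+} T$ is itself maximal in $L_0$ with core containing $A$, hence strongly nilregular — and an induction on dimension which is legitimate because ${\mathcal H}$ is a semi-homomorph.

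The step I expect to be the main obstacle is exactly this last one: proving that adjoining the self-centralising abelian ideal $A$ at the bottom strictly increases the nilpotent length, equivalently that $A$ is never absorbed into a later term of the upper nilpotent series of $A \dot{+} T$. For a general split extension this can fail, so the argument must make essential use of the fact that $A$ is a self-centralising minimal ideal of a primitive algebra in ${\mathcal H}$; this is where the strongly nilregular hypothesis really matters, since via Propositions \ref{p:nilregular} and \ref{p:nmax} it is what forces the nilradical, and then each $N_i$, of the subalgebras involved to descend correctly modulo $A$. Once the primitive quotient property has been established in this way, conclusions (i) and (ii) follow at once from Theorem \ref{t:crit}.
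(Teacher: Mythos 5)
Your overall strategy is exactly the paper's: take ${\mathcal H}$ to be the class of algebras all of whose maximal subalgebras have strongly nilregular cores, invoke Lemma \ref{l:crit} for the semi-homomorph property, and reduce everything to the primitive quotient property before applying Theorem \ref{t:crit}. The problem is that you never actually establish the primitive quotient property: you reduce it to showing that, for $T$ a maximal subalgebra of the complement $U$, the ideal $A$ is not absorbed into a later term of the upper nilpotent series of $A \dot{+} T$, and then you explicitly leave this as ``the main obstacle''. As written, the proof is therefore incomplete at its decisive step.

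The point you are missing is that this step is not an obstacle at all: it is precisely the content of Proposition \ref{p:nmax}, which you cite but never actually deploy. Let $L_0 \in {\mathcal H}$ be primitive with minimal ideal $A$; since $A$ is self-centralising, $A = N(L_0)$. Every maximal subalgebra of $L_0/A$ has the form $M/A$ with $M$ maximal in $L_0$ and $A \subseteq M$, so $M$ has compatibility index at least $1$ and, $M_{L_0}$ being strongly nilregular, Proposition \ref{p:nmax} gives $N(M) = N(L_0) = A$. Hence $n(M/A) = n(M) - 1$, and since $L_0$ is minimal non-${\mathcal N}$ (the only case needed, as you correctly observe), $n(M) \leq n(L_0) - 1$, so $n(M/A) < n(L_0) - 1 = n(L_0/A)$; as nilpotent length does not increase on passing to subalgebras, this already shows that $L_0/A$ is minimal non-${\mathcal N}$. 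That is the paper's one-line argument. Your detour through $A \dot{+} T$, Lemma \ref{l:quot} and the faithfulness of the action on $A$ is unnecessary: for $T$ maximal in $U$ the subalgebra $A \dot{+} T$ is exactly such an $M$, and $N(A\dot{+}T) = A$ resolves the dichotomy from Lemma \ref{l:quot} immediately.
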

\begin{proof} Let ${\mathcal H}$ be the class of Lie algebras whose maximal subalgebras have strongly nilregular cores. Then ${\mathcal H}$ is a semi-homomorph, by Lemma \ref{l:crit}.Let $L$ be a primitive algebra in ${\mathcal H}$ with minimal ideal $A=N(L)$, and let $M$ be a maximal subalgebra containing $A$. Then $N(M)=A$ by Proposition \ref{p:nmax}, so $L/A$ is minimal non-${\mathcal N(\leq n-1)}$ and ${\mathcal H}$ satisfies the quotient primitive property. The result now follows from Theorem \ref{t:crit}.
\end{proof}
\bigskip

A Lie algebra $L$ is called an $A$-algebra if all of its nilpotent subalgebras are abelian. These arise in the study of constant Yang–Mills potentials and in relation to the problem of describing residually finite varieties. The structure of solvable Lie $A$-algebras was studied in some detail in \cite{Aalg}. In the case of an $A$-algebra the lower nilpotent series and the derived series coincide (\cite[Lemma 2.3]{Aalg}), and so the terms ``derived length" and ``nilpotent length" are identical.

\begin{coro}\label{c:Acrit} If $L$ is an $A$-algebra which is minimal non-${\mathcal N(\leq n)}$, then $L$ is extreme and $L/N(L)$ is minimal non-${\mathcal N(\leq n-1)}$.
\end{coro}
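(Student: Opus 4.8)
The plan is to obtain Corollary \ref{c:Acrit} as an immediate consequence of Corollary \ref{c:crit} by checking that an $A$-algebra (in the relevant characteristic) automatically has the property that every maximal subalgebra has a strongly nilregular core. Over a field of characteristic zero every subalgebra is nilregular by definition, so there is nothing to prove and the result follows directly from Corollary \ref{c:crit}. Over a field of characteristic $p>0$ the key observation is that in an $A$-algebra every nilpotent subalgebra is abelian, hence of nilpotency class $1$; in particular, for any maximal subalgebra $M$ with compatibility index $r$, each quotient $N_k(M)/N_{k-1}(M)$ is a nilpotent subalgebra of the $A$-algebra $M/N_{k-1}(M)$ (note a quotient of an $A$-algebra is again an $A$-algebra, by \cite{Aalg}), so it is abelian, i.e. of nilpotency class $1$. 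Since any meaningful characteristic satisfies $p\geq 2$, class $1 < p-1$ fails only when $p=2$; so strictly speaking one must either assume $p>3$ (so that $1<p-1$) or invoke the convention under which $A$-algebras over fields of characteristic $2$ or $3$ are excluded or treated separately, exactly as in the cited work \cite{Aalg}. Under that standing hypothesis, each $M_L$ is strongly nilregular, and Corollary \ref{c:crit} applies verbatim.

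Concretely, I would write: ``If $F$ has characteristic zero the result is immediate from Corollary \ref{c:crit}, since every subalgebra is nilregular. So suppose $F$ has characteristic $p$. Let $M$ be a maximal subalgebra of $L$ of compatibility index $r$. Since $L$ is an $A$-algebra, so is every factor algebra of $M_L$; in particular $N_k(M_L)/N_{k-1}(M_L)$ is a nilpotent, hence abelian, subalgebra of $M_L/N_{k-1}(M_L)$ for $k=1,\ldots,r$, so it has nilpotency class $1<p-1$. Thus $M_L$ is strongly nilregular, and the conclusion follows from Corollary \ref{c:crit}.''

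The main obstacle — really the only one — is the small-characteristic bookkeeping: the inequality ``class $<p-1$'' forces $p\geq 3$ for class $1$ to qualify, so the statement is only clean once one adopts the same characteristic restrictions used throughout \cite{Aalg} for $A$-algebras (there the relevant fields have characteristic $0$ or $p>3$, or at least $p>2$). Provided those conventions are in force, there is no real mathematical content beyond quoting that a homomorphic image of an $A$-algebra is an $A$-algebra and that abelian means class $1$; everything else is delegated to Corollary \ref{c:crit}. I would therefore keep the proof to two or three lines and simply flag the characteristic assumption explicitly if it has not already been fixed for the section.
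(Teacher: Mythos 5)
Your route is genuinely different from the paper's, and it has a real gap at characteristic $2$ which you notice but then resolve by weakening the statement. The paper does \emph{not} prove this corollary by feeding $A$-algebras into Corollary \ref{c:crit}; it goes back to Theorem \ref{t:crit} and verifies its two hypotheses directly for the class ${\mathcal H}$ of $A$-algebras: ${\mathcal H}$ is a semi-homomorph by \cite[Lemma 2.1 (iii)]{Aalg}, and it has the primitive quotient property because for a primitive $L\in{\mathcal H}$ with minimal ideal $A=N(L)$ and a maximal subalgebra $M\supseteq A$, the nilradical $N(M)$ is abelian, so $[N(M),A]=0$ and $N(M)\subseteq C_L(A)=A$, giving $N(M)=A$. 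This argument uses only that nilpotent subalgebras are abelian and that $A$ is self-centralising; it imposes no restriction on the characteristic, which is why the corollary as stated carries none.

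The obstruction to your reduction is exactly the one you flag, but it cannot be waved away: over a field of characteristic $2$, ``nilregular'' requires nilpotency class $<p-1=1$, so no subalgebra with nonzero nilradical is ever nilregular, and Corollary \ref{c:crit} is simply unavailable. An abelian nilradical has class $1$, which satisfies $1<p-1$ precisely when $p\geq 3$ (your parenthetical ``assume $p>3$'' overshoots; $p>2$ is what is needed). So your argument is correct and clean for characteristic $0$ and $p\geq 3$ — and there the observation that each $N_k(M_L)/N_{k-1}(M_L)$ is an abelian nilradical of an $A$-algebra quotient is a nice way to see strong nilregularity — but for $p=2$ you would be proving a strictly weaker theorem, and importing characteristic restrictions from \cite{Aalg} is not justified, since the closure properties you need from there (subalgebras and the relevant quotients of $A$-algebras are $A$-algebras) hold over arbitrary fields. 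To match the paper's statement you should replace the appeal to Corollary \ref{c:crit} by a direct verification of the primitive quotient property along the lines above and then invoke Theorem \ref{t:crit}.
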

\begin{proof} Let ${\mathcal H}$ be the class of $A$-algebras. Then ${\mathcal H}$ is a semi-homomorph, by \cite[Lemma 2.1 (iii)]{Aalg}. Let $L \in {\mathcal H}$ be primitive with minimal ideal $A=N(L)$ and let $M$ be a maximal subalgebra containing $A$. Then $N(M)$ is abelian and so $[N(M),A]=0$, giving $N(M) \subseteq C_L(A)=A$. It follows that $N(M)=A$ and so ${\mathcal H}$ has the primitive quotient property.
\end{proof}

\begin{coro}\label{c:2crit} Let $L$ be minimal non-${\mathcal N(\leq 2)}$ and have solvability index $\leq 3$. Then $L$ is extreme and $L/N(L)$ is minimal non-${\mathcal N(\leq 1)}$.
\end{coro}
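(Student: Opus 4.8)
The plan is to deduce Corollary~\ref{c:2crit} from Corollary~\ref{c:crit} by verifying that a minimal non-${\mathcal N}(\leq 2)$ Lie algebra of solvability index $\leq 3$ has all maximal subalgebra cores strongly nilregular. Since the conclusion is vacuous over a field of characteristic zero (there every subalgebra is nilregular by convention, and indeed $\mathcal{N}(\leq 2)$ is everything), the content is over a field of characteristic $p>0$, and strong nilregularity of a core $M_L$ means each factor $N_k(M_L)/N_{k-1}(M_L)$ has nilpotency class less than $p-1$ for $k$ up to the compatibility index $r$ of $M$. So the task reduces to bounding the nilpotency classes of these factors using the hypothesis that $L^{(3)}=0$.

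First I would observe that $n(L)=3$: since $L$ is minimal non-${\mathcal N}(\leq 2)$ its nilpotent length exceeds $2$, and Lemma~\ref{l:cod}(i) gives $L^2=N_2(L)$ of codimension one in $L$, so together with $N_3(L)=L$ we get $n(L)=3$. Next, for any maximal subalgebra $M$ with compatibility index $r$, the relevant factors to control are $N_1(M_L)=N(M_L)$ when $r\geq 1$ and $N_2(M_L)/N(M_L)$ when $r\geq 2$; but $r\leq n(L)-1=2$, and if $r=2$ then $N_2(L)\subseteq M$, forcing $M$ to be an ideal of codimension one and hence (by the argument in Lemma~\ref{l:cod}(i)) $M=N_2(L)=L^2$, in which case $M_L=L^2$ is solvable of derived length $\leq 2$, i.e.\ metabelian, so both $N(M_L)$ and $N_2(M_L)/N(M_L)$ are abelian. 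When $r=1$ the only factor in question is $N(M_L)$, and I would argue that $M_L$ is itself solvable of index $\leq 3$, so its nilradical $N(M_L)$, being a nilpotent ideal, has $N(M_L)^{(2)}\subseteq N(M_L)^{(3)}=0$, hence is metabelian; a nilpotent metabelian Lie algebra has nilpotency class at most $\ldots$ — this is where care is needed, since metabelian nilpotent algebras can have arbitrarily large class. The correct line is instead: $N(M_L)$ is a nilpotent subalgebra of $L$, hence (derived-length $\leq 3$ passes to it, but more to the point) one shows directly that the nilradical of a Lie algebra of derived length $\leq 3$ has derived length $\leq 1$ is false in general — so I would instead invoke the structural restriction coming from $n(L)=3$ and Lemma~\ref{l:cod}(iii): if $N(L)\subseteq M$ then $N(M)\subseteq N_2(L)$, and then $N(M)_L\subseteq N_2(M)_L\cap M = N(M)$, with $N(M)$ metabelian as a subalgebra of $L^{(1)}\supseteq\ldots$; the cleanest route is to note $N(M_L)\subseteq N(L)$ fails in general, which is exactly the obstacle the paper built nilregularity to handle, so I must not assume it.

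Given that tension, the honest plan is: split on whether $p>2$ and whether the core is nilregular, and handle the genuinely bad case ($p=2$) by hand. For $p\geq 5$ every metabelian object trivially has class $<p-1$ only if its class is $\leq 2$, which need not hold, so the argument cannot be purely about solvability index; it must use that $n(L)=3$ tightly constrains $M_L$. Concretely I would show that when $r=1$, $M_L$ lies inside a subalgebra of nilpotent length $\leq 2$ (since any proper subalgebra of $L$ has nilpotent length $\leq 2$, and $M_L\subsetneq L$), so $M_L$ has nilpotent length $\leq 2$, whence $N(M_L)=N_1(M_L)$ has the property that $M_L/N(M_L)$ is nilpotent; combined with derived length $\leq 3$ of $M_L$, one gets $N(M_L)$ metabelian and $M_L/N(M_L)$ metabelian. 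To force class $<p-1$ on $N(M_L)$ I would then use that $M_L$ is an ideal of $L$ on which $L$ acts, and that $L/N(L)$ has nilpotent length $\leq 2$ while... The main obstacle, and I would flag it as such, is precisely pinning down the nilpotency class of $N(M_L)$: solvability index $3$ alone does not bound it, so the proof must exploit $n(L)=3$ plus the minimal non-$\mathcal N$ hypothesis (every proper subalgebra has nilpotent length $\leq 2$) to show $N(M_L)$ is actually abelian — this should follow because $N(M_L)\unlhd M_L$ with $M_L$ of nilpotent length $\leq 2$ forces $M_L=N(M_L)+ (\text{nilpotent complement})$ and the derived-length-$3$ condition then squeezes $N(M_L)$ to be abelian; once $N(M_L)$ and each $N_k(M_L)/N_{k-1}(M_L)$ are abelian, strong nilregularity holds for any $p\geq 3$, and the residual $p=2$ case is disposed of by the same abelianness, so Corollary~\ref{c:crit} applies and yields that $L$ is extreme with $L/N(L)$ minimal non-${\mathcal N}(\leq 1)$.
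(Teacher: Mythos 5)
Your plan is to deduce the corollary from Corollary~\ref{c:crit} by showing that every maximal subalgebra of $L$ has a strongly nilregular core, but this is not established and, as written, cannot be: you repeatedly flag the obstacle yourself (solvability index $3$ does not bound the nilpotency class of $N(M_L)$ --- metabelian nilpotent algebras have arbitrarily large class) and then assert without proof that ``the derived-length-$3$ condition squeezes $N(M_L)$ to be abelian.'' No argument is given for this, and none of the structural facts you cite ($M_L$ of nilpotent length $\leq 2$, $M_L$ metabelian in the $r=2$ case, etc.) yields it. Moreover, even if $N(M_L)$ were abelian, that only gives nilpotency class $\leq 1$, which satisfies ``class $<p-1$'' for $p\geq 3$ but \emph{fails} for $p=2$ (where nilregularity demands class $<1$, i.e.\ the zero algebra); your closing sentence that ``the residual $p=2$ case is disposed of by the same abelianness'' is therefore wrong. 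So the proposal has a genuine gap at its central step, and the route through Corollary~\ref{c:crit} does not go through.

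The paper takes a different and much lighter path that avoids nilregularity entirely: it applies Theorem~\ref{t:crit} directly to the class ${\mathcal H}$ of Lie algebras of solvability index $\leq 3$. That class is clearly a semi-homomorph (quotients do not increase derived length), so the only thing to check is the primitive quotient property: for $L\in{\mathcal H}$ primitive with minimal ideal $A=N(L)$ and $M$ a maximal subalgebra containing $A$, one shows $M/A$ is nilpotent. This is done by a short computation: $L^{(1)}=N_2(L)$ by Lemma~\ref{l:cod}(i) and $L^{(2)}\subseteq A$, so $N_2(L)/A$ is abelian; then $N(M)\subseteq N_2(L)$ by Lemma~\ref{l:cod}(iii) gives $[N_2(L),N(M)]\subseteq N_2(L)^2\subseteq A\subseteq M$, from which $M/A$ nilpotent follows in either of the cases $M=N_2(L)$ or $L=N_2(L)+M$. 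If you want to salvage your write-up, you should abandon the nilregularity verification and instead verify the two hypotheses of Theorem~\ref{t:crit} for the class of algebras of solvability index $\leq 3$ as above.
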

\begin{proof}  Let ${\mathcal H}$ be the class of Lie algebras of solvability index $\leq 3$. Then ${\mathcal H}$ is clearly a semi-homomorph. Let $L \in {\mathcal H}$ be primitive with minimal ideal $A=N(L)$ and let $M$ be a maximal subalgebra containing $A$. If $L$ has solvability index $\leq 2$ it is clear that $L/A$ is minimal non-${\mathcal N}$, so assume that $L$ has index $3$.
\par

We have $L^{(1)}=N_2(L)$, by Lemma \ref{l:cod} (i) and $L^{(2)} \subseteq N(L)=A$, so $N_2(L)/A$ is abelian. Now $N(M) \subseteq N_2(L)$ by Lemma \ref{l:cod} (iii), and so $$[N_2(L),N(M)] \subseteq N_2(L)^2 \subseteq A \subseteq M.$$ Then either $M \neq N_2(L)$ or $L=N_2(L)+M$, in which case $N(M)$ is an ideal of $L$ and $N(M)=A$. In either case $M/A$ is nilpotent and so ${\mathcal H}$ has the primitive quotient property.
\end{proof}

\begin{ex} Note that there are extreme Lie algebras which are not minimal non-${\mathcal N}$. For example, let $L$ be the Lie algebra over any field $F$ with basis $x, y, z$ with non-zero products $[x,y]=y+z$, $[x,z]=z$. Then $\phi(L)=Fz$ and $N(L)=Fy+Fz$, so $L$ is extreme. However, if $M=Fx+Fz$ then $n(M)=2=n(L)$.
\end{ex}
\medskip

Next we seek to characterise the algebras considered in Corollaries \ref{c:Acrit} and \ref{c:2crit}. We can characterise the $A$-algebras that are also minimal non-${\mathcal N}$ as follows.

\begin{theor}\label{Aalg} Let $L$ be a Lie $A$-algebra of derived length $n+1$ over a field $F$. Then $L$ is minimal non-${\mathcal N}$ if and only if the following hold.
\begin{itemize}
\item[(i)] $L= A_n \dot{+} A_{n-1} \ldots \dot{+} A_1 \dot{+} Fx$ where $A_i$ is an abelian subalgebra of $L$ for each $1 \leq i \leq n$;
\item[(ii)] $L^{(i)}= A_n \dot{+} A_{n-1} \ldots \dot{+} A_i$ for each $1 \leq i \leq n$;
\item[(iii)] $[A_i,A_j] \subseteq A_j$ for $j>i$; 
\item[(iv)] $A_i$ is an irreducible $L/L^{(i+1)}$-module for each $0 \leq i \leq n$; and
\item[(v)] $N_{n-i+1}(L)=L^{(i)}$ for each $0 \leq i \leq n$.
\end{itemize}
\end{theor}
\begin{proof} Suppose first that $L$ is minimal non-${\mathcal N}$. Since $L$ is an $A$-algebra of derived length $n+1$, $L= A_n \dot{+} A_{n-1} \ldots \dot{+} A_1 \dot{+} A_0$ and  $L^{(i)}= A_n \dot{+} A_{n-1} \ldots \dot{+} A_i$, where $A_i$ is an abelian subalgebra of $L$ for each $0 \leq i \leq n$, by \cite[Corollary 3.2]{Aalg}. But $\dim A_0=1$, by Lemma \ref{l:cod} (i), so $A_0 =Fx$ for some $x \in L$. This gives (i) and (ii). The decomposition in (i) follows from the splitting of a Lie $A$-algebra over each term in its derived series (\cite[Theorem 3.1]{Aalg}), so $L=A_n \dot{+} B_n$, where $A_n=L^{(n)}$, $B_n=A_{n-1} \dot{+} B_{n-1}$, where $A_{n-1}=B_n^{(n-1)}$, and so on. But now $[A_i,A_j] \subseteq L^{(j)} \cap B_{j+1} \subseteq A_j$ if $j>i$, giving (iii).
\par

A straightforward induction argument shows that $A_i \subseteq N_{n-i-1}$ for $0 \leq i \leq n$. We now establish (iv) and (v) by induction on $n$. Then (iv) clearly holds for $i=0$, and (v) holds for $i=0$ by Lemma \ref{l:cod} (i), so suppose that they hold for all $i \geq k$ ($k \geq 0$). Then $N_{n-k+1}(L)=L^{(k)}=A_n \dot{+} \ldots \dot{+} A_k$ and $L^{(k+1)}=A_n \dot{+} \ldots \dot{+} A_{k+1} \subseteq N_{n-k}(L)$. It follows that $N_{n-k}(L)=L^{(k+1)}$ by the irreducibility of $A_k$ and the fact that $N_{n-k}(L) \neq N_{n-k+1}(L)$ (since $L$ has nilpotent length $n+1$).
\par

Also $\phi_{n-k}(L) \subseteq L^{(k+1)}=N_{n-k}(L)$, and $N_{n-k}(L)/\phi_{n-k}(L)$ is irreducible, since $L$ is extreme, by Theorem \ref{t:crit}. Hence $M=\phi_{n-k}(L) \dot{+} A_{k} \dot{+} \ldots \dot{+} A_1 \dot{+} Fx$ is a maximal subalgebra of $$A_{k+1} \dot{+} A_k \dot{+} \ldots \dot{+} A_1 \dot{+} Fx \cong L/L^{(k+2)} = L/N_{n-k-1}(L).$$
But $L/N_{n-k-1}(L)$ is minimal non-${\mathcal N}(k+1)$, by Theorem \ref{t:crit} (ii), so $M \in {\mathcal N}(k+1)$. It follows that $N(M)= \phi_{n-k}(L) \dot{+} A_k$, so $[\phi_{n-k}(L),A_k]=0$. But $A_k$ is a Cartan subalgebra of $A_{k+1} \dot{+} A_k$, by \cite[Theorem 3.1]{Aalg}, so $\phi_{n-k}(L)=0$ and $A_k$ is an irreducible $L/L^{(k+1)}$-module. This establishes (iv) and (v).
\par

Conversely, suppose that (i)-(v) hold and let $M$ be a maximal subalgebra of $L$. Clearly $L \in {\mathcal N}(n+1)$. Let $i$ be the smallest integer such that $L^{(i)} \not \subseteq M$. Then $L^{(i)}/L^{(i+1)}$ is a minimal ideal of $L/L^{(i+1)}$, so $M/L^{(i+1)}$ complements $L^{(i)}/L^{(i+1)}$ in $L/L^{(i+1)}$. Hence $M^{(i)} \subseteq M \cap L^{(i)} \subseteq L^{(i+1)}$. But $n(L^{(i+1)})<n-i+1$ by (v). Hence $n(M)<n+1$ and $L$ is minimal non-${\mathcal N}$.
\end{proof}
\bigskip

 Recall the following result from \cite{aN-by-A}. 

\begin{theor}(\cite[Theorem 4]{aN-by-A})\label{t:aN-by-A} Let $L$ be solvable and $\phi$-free. Then $L$ is minimal non-(nilpotent-by-abelian) if and only if $F$ has characteristic $p>0$ and $L=A\dot{+} B$, where $A$ is the unique minimal ideal of $L$, $\dim A \geq 2$, $A^2=0$, and either $B=M\dot{+} Fx$, where $M$ is a minimal ideal of $B$ such that $M^2=0$ (type I), or $B$ is the three-dimnsional Heisenberg algebra (type II). Moreover, if $p \geq 3$ then $\dim A$ is divisible by $p$.
\end{theor}
\bigskip

Then we have the following characterisation of the algebras of solvability index $3$ which are minimal non-${\mathcal N}(2)$.

\begin{theor}\label{t:solv3} Let $L$ be a solvable Lie algebra of solvability index $3$. Then $L$ is minimal non-${\mathcal N}(2)$ if and only if it is minimal non-(nilpotent-by-abelian) of type I.
\end{theor}
\begin{proof} Let $L$ be minimal non-${\mathcal N}(2)$, and denote the image of a subalgebra $S$ of $L$ under the natural homomorphism onto $L/\phi(L)$ by $\overline{L}$. Then  $\overline{L}=\overline{N(L)}\dot{+} \overline{U}$ where $U$ is a subalgebra of $L$, by \cite[Theorems 7.3 and 7.4]{frat}. Moreover, $\overline{U}= \overline{A}\dot{+}F\overline{x}$ where $\overline{A}$ is abelian, $N_2(L)=N(L)+A$ and $L/N(L)$ is minimal non-nilpotent, by Corollary \ref{c:2crit}. It follows from \cite[Theorem 2.1]{nilp} that $N_2(L)/N(L)$ is irreducible. Now $U$ is a maximal subalgebra of $L$, so $U \in {\mathcal N}(2)$, which yields that $N(U)=A$ and $U$ is nilpotent-by-abelian.

Let $M$ be any maximal subalgebra of $L$. Then either $M\cong U$ or $N(L)\subseteq M$. Suppose that $N(L)\subseteq M$. Then $M=N_2(L)$ or $L=N(L)+Fx$. In either case $M$ is nilpotent-by-abelian. Hence $L$  is minimal non-(nilpotent-by-abelian). It is of type I, since otherwise $L \in {\mathcal N}(2)$.
\par

Conversely, suppose that $L$ is minimal non-(nilpotent-by-abelian) of type I. Then $L$ has solvability index $3$ and the maximal subalgebras of $L$ are nilpotent-by-abelian, as in the paragraph above. Clearly $L$ itself is not nilpotent-by-abelian.
\end{proof}
\bigskip

Lie algebras as described in Theorem \ref{t:solv3} do exist over every field of characteristic $p>0$, as is shown in \cite{aN-by-A}; over an algebraically closed field they are minimal non-supersolvable (\cite[Theorem 5]{aN-by-A}). Finally we show that a homomorphic image of a minimal non-${\mathcal N}$ Lie algebra is minimal non-${\mathcal N}$ if it has a complemented minimal ideal.
 
\begin{theor}\label{t:chief} If $L$ is a minimal non-${\mathcal N}$ Lie algebra in which all maximal subalgebras have nilregular cores, and $A/B$ is a complemented chief factor of $L$, then $L/B$ is minimal non-${\mathcal N}$.
\end{theor}
\begin{proof} Suppose there is a chief series of $L$ through $A$ and $B$ in which $A/B$ is the $k$th complemented factor, where $1 \leq k \leq c(L)$. If $k=1$, then $B \subseteq \phi(L)$ and so $n(L/B)=n(L)$, in which case $L/B$ is minimal non-${\mathcal N}$ because $L$ is.
\par

So let $k >1$ and assume that the theorem holds for the $(k-1)$th complemented factor $C/D$. Without loss of generality we may assume that $D=0$. Then $A/B$ is the second complemented factor in some chief series of $L$. It follows from Theorem \ref{t:crit} that $L$ and $L/N(L)$ are extreme, and so every chief series of $L$ has only one complemented chief factor below $N(L)$, by Theorem \ref{t:extreme}. If $N(L) \cap A \not \subseteq B$ then $N(L) \cap A/N(L) \cap B$ is a complemented chief factor of $L$ and $L$ would have a chief series with two complemented chief factors below $N(L)$, a contradiction. Hence $N(L) \cap A \subseteq B$.
\par

Let $n=n(L)$. Then $L/A$ and $L/B$ are both extreme, by Lemma \ref{l:factor} and so
\[ n(L/B)=c(L/B)=n-1 \hspace{2mm} \hbox{ and } \hspace{2mm} n(L/A)=c(L/A)=n-2,
\]
by Theorem \ref{t:extreme}. Let $M/B$ be a maximal subalgebra of $L/B$. We have $B \not \subseteq \phi(L)$ and $N(L)/\phi(L)$ is a chief factor of $L$, so $M \supseteq \phi(L)+B \supseteq N(L)$. But $L/N(L)$ is minimal non-${\mathcal N}$, by Theorem \ref{t:crit}, so $n(M/N(L)) \leq n-2$. It follows that $N_{n-2}(M) \subseteq N(L) \cap A \subseteq B$, whence $n(M/B) \leq n-2 < n(L/B)$ and $L/B$ is minimal non-${\mathcal N}$, as claimed.
\end{proof}
\bigskip


\begin{thebibliography}{1}

\bibitem{barnes} {\sc D.W. Barnes}, `On the cohomology of soluble Lie algebras', {\em Math. Zeit.} {\bf 101}
(1967), 343--49.

\bibitem{b-g-h} {\sc D.W. Barnes and H.M. Gastineau-Hills}, `On the theory of soluble Lie algebras', {\em Math. Z.} {\bf 106} (1968), 343--354.

\bibitem{aN-by-A} {\sc K. Bowman and D.A Towers}, `On almost Nilpotent-by-Abelian Lie Algebras', {\em Lin. Alg. Appl.} {\bf 247} (1996), 159--167. 

\bibitem{c-f-h} {\sc R. Carter, B. Fischer and T. Hawkes}, `Extreme classes of finite soluble groups', {\em J. Algebra} {\bf 9} (1968), 285--313.

\bibitem{frick} {\sc M. Frick}, `The nilp0otent length of finite soluble groups', {\em J. Austral. Math. Soc.}  {\bf 16}  (1973), 357–-362. 

\bibitem{mak} {\sc D. V. Maksimenko}, `On action of outer derivations on nilpotent ideals of Lie algebras', {\em Algebra Discrete Math.} {\bf 1} (2009), 74--82.

\bibitem{sel} {\sc G.B. Seligman}, `Characteristic ideals and the structure of Lie algebras', {\em Proc. Amer. Math. Soc.} 
{\bf 8, No. 1}  (1957), 159--164. 

\bibitem{frat} {\sc D.A. Towers}, `A Frattini theory for algebras', {\em
Proc. London Math. Soc.} (3) {\bf 27} (1973), 440--462.

\bibitem{nilp} {\sc D.A. Towers}, `Lie algebras all of whose proper subalgebras are nilpotent', {\em Lin. Alg. Appl.} {\bf 32} (1980), 61--73. 

\bibitem{Aalg} {\sc D.A. Towers}, `Solvable Lie $A$-algebras', {\em J. Algebra} {\bf 340} (2011), 1--12.

\bibitem{chief} {\sc D.A. Towers}, `Maximal subalgebras and chief factors of Lie algebras', {\em J. Pure Appl. Algebra} {\bf 220} (2016), 482--493.

\bibitem{nmax} {\sc D.A. Towers}, `On $n$-maximal subalgebras of Lie algebras', to appear in Proc., Amer. Math. Soc., http://arxiv.org/abs/1502.02865.

\bibitem{gennil} {\sc D.A. Towers}, `The generalised nilradical of a Lie algebra', preprint.

\end{thebibliography}
\end{document}